\newtheorem{theorem}{Theorem}[section]
\newtheorem{proposition}[theorem]{Proposition}
\newtheorem{lemma}[theorem]{Lemma}
\newtheorem{corollary}[theorem]{Corollary}
\newtheorem{remark}[theorem]{Remark}
\newcommand{\N}{\mathbb{N}}
\DeclareMathOperator{\esssup}{esssup}
\DeclareMathOperator{\esinf}{essinf}
\DeclareMathOperator{\Cov}{Cov}
\numberwithin{equation}{section}
\title[]{Almost sure invariance principle for random dynamical systems via  Gou\"ezel's approach} 
\date{\today}
\begin{document}
 \maketitle
 \begin{center}
\authors{D.\ Dragi\v cevi\' c \footnote{Department of Mathematics, University of Rijeka, Rijeka Croatia. E-mail: {\tt \email{ddragicevic@math.uniri.hr}}.},
Y.\ Hafouta\footnote{Department of Mathematics, The Ohio State University, Columbus OH USA. E-mail: {\tt \email{yeor.hafouta@mail.huji.ac.il}, \email{hafuta.1@osu.edu}}}}
\end{center}


\begin{abstract}
We extend the spectral approach of S. Gou\"ezel  for the vector-valued almost sure invariance principle (ASIP)
to certain classes of non-stationary sequences with a weaker control over the behavior of the covariance matrices, assuming only linear growth.
Then we apply this extension
to obtain a quenched  vector-valued ASIP for  random perturbations of a fixed Anosov diffeomorphism as well as random perturbations of a billiard map associated to the periodic Lorentz gas. 
We also consider  certain classes of random piecewise expanding maps.
\end{abstract}

\section{Introduction}

\subsection{Almost sure invariance principle (ASIP)}
The almost sure invariance principle (ASIP) represents  a powerful statistical tool.
Given a sequence of vector-valued random variables $A_0,A_1,A_1,\ldots$,  it provides a coupling with an independent sequence of Gaussian random vectors $Z_0,Z_1,Z_2,\ldots$ such that
$$
\left|\sum_{j=0}^{n-1}A_j-\sum_{j=0}^{n-1}Z_j\right|=o(s_n),
$$ 
where $s_n=\|S_n\|_2$, $S_n=\sum_{j=0}^{n-1}A_j$, and the $L^2$-norm of the sum $\sum_{j=0}^{n-1}Z_j$ has the form $s_n(1+o(1))$.  We stress that ASIP implies several other important limit theorems, such as the central limit theorem and the functional central limit theorem. We refer to~\cite{PS} for a detailed discussion. 

\subsection{Spectral approach for ASIP}\label{SASIP}
As will be discussed below, many  proofs of the ASIP in the scalar case rely on an appropriate martingale approximation. In~\cite{GO}, S. Gou\"ezel developed a spectral method for proving the ASIP for classes of non-stationary random vectors which are bounded in $L^p$ for some $p>4$, satisfying certain mixing assumptions of a ``spectral type" and having a certain control\footnote{Roughly speaking, it is assumed that $\text{Cov}(S_n-S_m)\asymp (n-m)\Sigma^2$ for some positive definite matrix $\Sigma^2$.}  over the behavior of the covariance matrices of $S_n-S_m$ for $n>m$, where $S_n:= \sum_{k=0}^{n-1} A_k$. More recently, in \cite{DH1} we have extended this approach to real-valued sequences $\{A_j\}$ with the property that the variance of $S_n-S_m$ grows linearly fast in $n-m$.  
We stress that the purpose for this extension was to handle certain  induced random dynamical systems, and so apart from weakening the type of a  covariance control, we have established the version of ASIP when $\|A_n\|_{L^p}=O(n^{1/p})$. 

In this paper we present a second extension of  Gou\"ezel's approach (see Theorem~\ref{Gthm}), proving the ASIP under the spectral mixing assumptions for sequences bounded in $L^p$ with the property that the covariance matrix of $S_n$ grows linearly fast in $n$. We emphasize that we do not impose 
 any assumptions related to the behaviour of the covariance matrices of $S_n-S_m$. The purpose of this extension is to obtain the ASIP for several classes of random expanding or random hyperbolic dynamical systems (see Theorem~\ref{Th} and examples presented in Section~\ref{EX}). We note that in~ \cite{DH} we have proved several other limit theorems (such as the central limit theorem, Berry-Esseen bounds, Edgeworth expansions, the local limit theorem and several versions of a large deviations principle)  for vector-valued observables and  essentially the same classes of random dynamical systems as studied in the present paper. However, ASIP remained out of reach of the techniques developed in~\cite{DH}. Conversely, the results in the present paper imply only the versions of the central limit theorem discussed in~\cite{DH}, while all other results in~\cite{DH} are completely independent of those developed in the present paper. 

\subsection{ASIP for deterministic dynamical systems}
We emphasize that the ASIP has been widely studied for deterministic dynamical systems. We in particular mention the works of
 Field, Melbourne and T\"or\"ok~\cite{FieldMelbourneTorok} as well as Melbourne and Nicol~\cite{MN1, MN2} (completed by Korepanov~\cite{KorepanovEq}), in which the authors obtained ASIP for wide classes of (nonuniformly) hyperbolic maps. In contrast to their approaches which relied on martingale techniques,  Gou\"ezel's~\cite{GO} spectral approach works for vector-valued sequences, and was applied in \cite{GO}  to certain classes of deterministic dynamical systems, with the property that the corresponding transfer operator has a spectral gap on an appropriate Banach space. In situations when  this method is applicable, it was pointed out in~\cite{GO} that it gives better error
rates in ASIP from those obtained in~\cite{MN1, MN2}. Finally, we mention the recent important papers by Cuny and Merlevede~\cite{CM},  Korepanov, Kosloff and Melbourne~\cite{KZM}, Korepanov~\cite{Korepanov}, as well as Cuny, Dedecker,  Korepanov and  Merlevede~\cite{CDKM, CDKM2} in which the authors
further improved the error rates in ASIP for a wide class of (nonuniformly) hyperbolic deterministic dynamical systems. 
\subsection{ASIP for random dynamical systems}
A random dynamical system is generated by random compositions 
\[T_\omega^{n}:=T_{\sigma^{n-1}\omega}\circ \cdots \circ T_{\sigma\omega}\circ T_\omega \quad \omega\in\Omega,  \ n\in \N,
\] of maps $T_\omega$ (acting on some space $M$) which are 
driven by an invertible, measure-preserving transformation $\sigma$ on some probability space $(\Omega, \mathcal F, \mathbb P)$. We stress that random dynamical systems are key tools to model many natural phenomena, including the transport
in complex environments such as in the ocean or the atmosphere~\cite{Arnold}.

To the best of our knowledge, the ASIP in the context of random dynamical systems was first discussed by Kifer~\cite{kifer}. Indeed, in~\cite{kifer} it was mentioned that the techniques developed there can be used to obtain  the scalar-valued quenched ASIP for random expanding dynamics. More recently, the \emph{annealed} ASIP was obtained for several classes of random dynamical systems~\cite{ANV,STE,STSU}. In these works, the idea is to consider the transfer operator associated to the skew-product transformation (see~\eqref{spt}) and to show that it has a spectral gap on an appropriate space. Then, it remains to apply Gou\"ezel's ASIP~\cite{GO}. However, in order for this method to work, it is necessary to make strong (mixing) assumptions on the base space $(\Omega, \mathcal F, \mathbb P)$ of the random dynamical system (in all mentioned papers, $(\Omega, \mathcal F, \mathbb P)$ is a Bernoulli shift). On the other hand, in \cite{HNTV} the authors proved a scalar-valued ASIP for certain classes of sequential  expanding or hyperbolic dynamical systems with some assumptions on the growth rates of the variances. The approach in \cite{HNTV} relied on an appropriate  martingale approximation combined with the results from~\cite{CM}.
Moreover, in~\cite{DFGTV1} (by building on the approach developed in~\cite{CM, HNTV}) the authors have obtained the quenched scalar-valued ASIP for certain classes of random piecewise expanding dynamics without any mixing assumptions for the base space. 
Finally, we mention two recent papers \cite{Su1, Su2} by Y. Su  devoted to the ASIP for certain classes of random expanding maps and maps which admit a random tower extension. We note that while the latter two papers concern random dynamics with sub-exponential decay of correlations, the ASIP rates obtained there are not as good as the ones in \cite{DFGTV1} and the present paper.


\subsection{Contributions of the present paper}
As we have already mentioned in Subsection~\ref{SASIP},  in the present paper we establish (see Theorem~\ref{Gthm}) an appropriate modification of~\cite[Theorem 1.3.]{GO}, and use it to establish the quenched ASIP for several classes of random dynamical systems (see
Theorem~\ref{Th} and examples given in Section~\ref{EX}).
More precisely, we consider the following three cases:
\begin{itemize}
\item maps $T_\omega$, $\omega \in \Omega$ are Anosov diffeomorphisms on a compact Riemannian manifold $M$ that belong to a sufficiently small neighborhood of a fixed Anosov diffeomorphism $T$ on $M$;
\item maps $T_\omega$, $\omega \in \Omega$ are suitable  perturbations of a billiard map associated to the periodic Lorentz gas studied by Demers and Zhang~\cite{DZ};
\item $(T_\omega)_{\omega \in \Omega}$ is a family of piecewise expanding maps on the unit interval satisfying appropriate conditions as in~\cite{DFGTV1, DFGTV2}.
\end{itemize}
For a  sufficiently regular  random vector-valued observable $g_\omega:X\to \mathbb{R}^d$, $\omega\in\Omega$, our quenched ASIP  implies that  for $\mathbb P$-a.e. $\omega \in  \Omega$,  the random Birkhoff sums $\sum_{j=0}^{n-1}g_{\sigma^j\omega}\circ T_\omega^{j}$ can be approximated in the strong sense by a sum of Gaussian independent random vectors $\sum_{j=0}^{n-1}Z_j$ (depending on $\omega$) , with the error being negligible compared to $n^{\frac12}$. In fact, we will show that for every $\epsilon>0$, the error  term is at most $o(n^{1/4+\epsilon})$. In particular,  we extend the scalar ASIP for random expanding maps from \cite{HNTV, DFGTV1} to vector-valued observables. Furthermore, we for the first time obtain the quenched ASIP for some classes of random hyperbolic dynamics. 

Finally, let us explain why our modification of~\cite[Theorem 1.3]{GO} is needed. We observe that~\cite[Theorem 1.3]{GO} can be applied 
when the asymptotic covariance matrix
$$
\Sigma^2=\lim_{n\to\infty}\frac 1n\text{Cov}(S_n)
$$
exists and 
\begin{equation}\label{SC}
\Big|\text{Cov}(S_n-S_m)-(n-m)\Sigma^2\Big|\leq C(m-n)^{\alpha},
\end{equation}
for some $0<\alpha<1$ small enough. In~\cite{GO}, it is explained that this condition is satisfied for wide classes of \emph{deterministic} dynamical systems with some expansion or hyperbolicity. However, 
 from a general non-stationary point of view the above condition is strong. From a probabilistic point of view, random dynamical systems are examples of non-stationary processes with an asymptotic covariance matrix
\[
\Sigma^2:=\lim_{n\to\infty}\frac 1n \Cov_\omega\left(\sum_{j=0}^{n-1}g_{\sigma^j\omega}\circ T_\omega^{j}\right).
\]
However, the rate of the convergence in the above expression depends on $\omega$, since the covariance matrix of  random Birkhoff sums is controlled by certain ergodic averages.
Thus,  \eqref{SC} may fail to hold in a random setup. 

Our modification of~\cite[Theorem 1.3]{GO} is precisely tailored to overcome this difficulty. We stress that Theorem~\ref{Gthm} is completely different from~\cite[Theorem 1]{DH}, and that we expect that a combination of those two results  will lead to an extension of \cite[Theorem 1.3]{GO} not only in the absence of~\eqref{SC}, but also when the underlying sequence $\{A_n\}$ satisfies $\|A_n\|_p=O(n^{b_p})$ for some  constant $b_p>0$ and $p>4$.
\subsection{Organization of the paper}
The paper is organized as follows: in Section~\ref{Sec6} we formulate the main result of the present paper (see Theorem~\ref{Gthm}), which gives an abstract ASIP result for certain classes of non-stationary sequences.
The proof of Theorem~\ref{Gthm} is presented in Section~\ref{PROOF}.  In Section~\ref{Random}, we give  sufficient conditions under which Theorem~\ref{Gthm} can be applied in the context of random dynamics. In particular, we establish Theorem~\ref{Th} which gives an abstract version of the ASIP for random dynamical systems. 
Finally, in Section~\ref{EX} we discuss concrete examples of random dynamics to which Theorem~\ref{Th} is applicable.

\section{Gou\"ezel's ASIP in the nonstationary setup}\label{Sec6}
The purpose of this section is to provide a  certain modified version of~\cite[Theorem 1.3.]{GO}.

Let $(A_1, A_2, \ldots )$ be an $\mathbb R^d$-valued process on some probability space $(\Omega, \mathcal F, \mathbb P)$, where $d\in \N$. We will denote the scalar product of two vectors $t$ and $v$ in $\mathbb R^d$ by $t\cdot v$, and when it is more convenient we will also abbreviate and just write $tv$. We will also denote the Euclidean norm of a vector $v$ by $|v|$.
We first recall the  condition introduced in~\cite{GO}: there exists $\varepsilon_0>0$ and $C,c>0$ such that for any $n,m\in \N$, $b_1<b_2< \ldots <b_{n+m+1}$, $k\in \N$ and $t_1,\ldots ,t_{n+m}\in\mathbb R^d$ with $|t_j|\leq\varepsilon_0$, we have that
\begin{equation}\label{HH}
\begin{split}
&\Big|\mathbb E\big(e^{i\sum_{j=1}^nt_j(\sum_{\ell=b_j}^{b_{j+1}-1}A_\ell)+i\sum_{j=n+1}^{n+m}t_j(\sum_{\ell=b_j+k}^{b_{j+1}+k-1}A_\ell)}\big) \\
&-\mathbb E\big(e^{i\sum_{j=1}^nt_j(\sum_{\ell=b_j}^{b_{j+1}-1}A_\ell)}\big)\cdot\mathbb E\big(e^{i\sum_{j=n+1}^{n+m}t_j(\sum_{\ell=b_j+k}^{b_{j+1}+k-1}A_\ell)}\big)\Big| \\
&\le C(1+\max|b_{j+1}-b_j|)^{C(n+m)}e^{-ck}.
\end{split}
\end{equation}

Our extension of~\cite[Theorem 1.3]{GO} is the following result. 
\begin{theorem}\label{Gthm}
Let $\{A_n\}$ be a centered sequence of $\mathbb R^d$-valued random variables which is bounded in $L^p$ for some $p>4$, and satisfies property~\eqref{HH}. Assume, in addition, that there exists a constant $c_1>0$ so that for every sufficiently large $n$ and $v\in\mathbb R^d$ we have that 
\begin{equation}\label{CovCond}
\Cov \bigg{(}\sum_{j=1}^{n}A_j \bigg{)}v\cdot v\geq c_1n|v|^2.
\end{equation}
Then, there exists a  coupling between $\{A_n\}$ and a sequence of independent and centered Gaussian 
$d$-dimensional random vectors $Z_1,Z_2,\ldots$ such that for all $\delta>0$ we have 
\begin{equation}\label{asip rate}
\left|\sum_{j=1}^{n}A_j-\sum_{j=1}^{n}Z_j\right|=o(n^{a_p+\delta})\quad\text{$\mathbb P$-a.s.},
\end{equation}
where $a_p=\frac p{4(p-1)}=\frac14+\frac 1{4(p-1)}$. Moreover, there exists a constant $C=C_\delta>0$ so that for every $n\geq1$,
\begin{equation}\label{Var est}
\left\|\sum_{j=1}^n A_j-\sum_{j=1}^n Z_j\right\|_{L^2}\leq Cn^{a_p+\delta}.
\end{equation}
Finally, if there are $C_0>0$ and $r\in (0,1)$ so that for every unit vector $v\in\mathbb R^d$ and $n,k\in\mathbb N$ we have 
\begin{equation}\label{UniDec}
\left|\text{Cov}(A_n\cdot v, A_{n+k}\cdot v)\right|\leq C_0r^k,
\end{equation}
then there is a constant $C=C_\delta>0$ so that for all $n\geq1$  and a unit vector $v\in\mathbb R^d$,
\begin{equation}\label{Var est1}
\left\|\sum_{j=1}^n A_j\cdot v\right\|_{L^2}^2-Cn^{2a_p+\delta}\leq \left\|\sum_{j=1}^n Z_j\cdot v\right\|_{L^2}^2\leq \left\|\sum_{j=1}^n A_j\cdot v \right\|_{L^2}^2+Cn^{2a_p+\delta}.
\end{equation}
\end{theorem}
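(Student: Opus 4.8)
\emph{Plan of proof.}
The plan is to follow the block-decomposition-and-reconstruction scheme of~\cite{GO}, in the streamlined non-stationary form developed in~\cite{DH1}, but with one structural change dictated by the absence of~\eqref{SC}: at no point will a limiting covariance matrix be used. Instead, each Gaussian block will be matched to the \emph{actual} covariance matrix of the corresponding block of the $A_n$'s, and the hypothesis~\eqref{CovCond} will enter only through the non-degeneracy (a uniform lower bound on the smallest eigenvalue) that it supplies for the covariance matrices of all sufficiently long blocks.

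The first step is to record the two analytic consequences of the hypotheses that will be used. Specializing~\eqref{HH} to $n=m=1$ and blocks of length one and differentiating the resulting inequality twice at the origin (legitimate since $\{A_n\}$ is bounded in $L^p$ with $p>4>2$) yields exponential decay of correlations, $|\Cov(A_i\cdot u,A_j\cdot v)|\le Ce^{-c|i-j|}$ for unit vectors $u,v$; in particular $\Cov(S_n)\le C_2 n\,\Id$, and combined with~\eqref{CovCond} this makes the covariance matrix of $S_n$ --- and, again by the decay of correlations, of any block $\sum_{\ell=m+1}^{n}A_\ell$ with $n-m$ sufficiently large --- comparable to $n\,\Id$ (resp.\ $(n-m)\,\Id$) from both above and below. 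Applying~\eqref{HH} on longer blocks in the same way, together with the $L^p$ bound, also gives a Rosenthal--Marcinkiewicz--Zygmund type estimate $\bigl\|\sum_{\ell=m+1}^{n}A_\ell\bigr\|_{L^p}\le C(n-m)^{1/2}$, which is the form in which $L^p$-boundedness will be exploited.

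Next I would carry out Gou\"ezel's construction. Partition $\{1,2,\dots\}$ into consecutive intervals $I_1,I_2,\dots$ of lengths $|I_k|\asymp k^{\gamma}$, separated by small gaps, for a parameter $\gamma>0$ to be optimized, and put $B_k=\sum_{\ell\in I_k}A_\ell$. By~\eqref{HH} the joint characteristic function of $(B_1,\dots,B_N)$ differs, on the box of radius $\varepsilon_0$, from the product of its marginals by an amount that is summably small once the gaps are chosen to grow slowly; the reconstruction lemma of~\cite{GO} (a quantitative form of the Berkes--Philipp coupling) then produces, on one probability space, an \emph{independent} sequence $\tilde B_1,\tilde B_2,\dots$ with each $\tilde B_k$ distributed as $B_k$ and with $\sum_{k\le N}\bigl|B_k-\tilde B_k\bigr|$ controlled outside an event of small probability. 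To the independent, but not identically distributed, vectors $\tilde B_k$ I would apply the Gaussian approximation theorem for sums of independent random vectors (Zaitsev/Einmahl, in the form used in~\cite{GO}), producing independent centered Gaussians $W_k$ with $\Cov(W_k)=\Cov(B_k)$ and an almost sure bound on $\bigl|\sum_{k\le N}\tilde B_k-\sum_{k\le N}W_k\bigr|$; here~\eqref{CovCond} is precisely what guarantees that $\Cov(B_k)$, for $k$ large, has smallest eigenvalue bounded below by a multiple of $|I_k|$, which is the non-degeneracy that theorem requires. Finally one distributes the Gaussian mass of $W_k$ among independent centered Gaussians $Z_n$, $n\in I_k$, summing to $W_k$; bounds the within-block fluctuations of both $\sum_n A_n$ and $\sum_n Z_n$ by the moment estimate of the previous paragraph; and optimizes $\gamma$ against $p$. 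The bookkeeping is the same as in~\cite{GO,DH1} and produces the exponent $a_p=\frac{p}{4(p-1)}$ in~\eqref{asip rate}; carrying $L^2$-norms through the same chain of estimates gives~\eqref{Var est}.

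The step I expect to be the main obstacle is the one where~\cite{GO} invokes~\eqref{SC}: there the Gaussian approximations built on different scales are automatically mutually consistent, every block covariance being a scalar multiple of the fixed matrix $\Sigma^2$, whereas here one must check by hand that grouping blocks is compatible with the matching, i.e.\ that $\Cov\bigl(\sum_{k\in G}B_k\bigr)-\sum_{k\in G}\Cov(B_k)$ is negligible. This difference is an off-diagonal sum of $\Cov(A_i\cdot v,A_j\cdot v)$ across block boundaries, hence $O(\#\text{blocks})$ by the exponential decay of correlations; the delicate point is to keep these errors, together with the reconstruction and Gaussian-approximation errors, all below the target rate while simultaneously keeping the blocks long enough for~\eqref{CovCond} to apply. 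For the final assertion~\eqref{Var est1}, the uniform decay~\eqref{UniDec} makes this quantitative: by independence and the covariance matching, $\bigl\|\sum_{n\le N}Z_n\cdot v\bigr\|_{L^2}^2$ equals $\sum_k\Cov(B_k)v\cdot v$ up to a single partial block, and this differs from $\Cov(S_N)v\cdot v=\bigl\|\sum_{n\le N}A_n\cdot v\bigr\|_{L^2}^2$ only by the cross-block sum $2\sum_{k<k'}\Cov(B_k\cdot v,B_{k'}\cdot v)$, which by~\eqref{UniDec} is $O(\#\text{blocks})=o(N^{2a_p+\delta})$ for the block sizes fixed in the optimization (note $2a_p>\tfrac12$); this yields~\eqref{Var est1}.
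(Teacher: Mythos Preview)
Your proposal contains a genuine gap at the step you yourself flag as the main obstacle, but the gap is not where you think it is. You assert that~\eqref{CovCond} together with the exponential decay of correlations implies that the covariance of an \emph{arbitrary} block $\sum_{\ell=m+1}^n A_\ell$ is bounded below by a multiple of $(n-m)\,\Id$; you then use this to say that $\Cov(B_k)$ has smallest eigenvalue $\gtrsim |I_k|$, which is what feeds into the Zaitsev/Einmahl step. This implication is false. A clean counterexample in $d=1$: take $A_n$ independent and centered with $\mathrm{Var}(A_n)=1$ for $n\in[2^{2k},2^{2k+1})$ and $A_n\equiv 0$ for $n\in[2^{2k+1},2^{2k+2})$. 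Then~\eqref{HH} holds trivially, correlations decay (they vanish), and $\mathrm{Var}(S_n)\ge n/3$ for all large $n$, so~\eqref{CovCond} is satisfied; yet the block $[2^{2k+1},2^{2k+2})$ has variance zero. Thus you cannot apply the Gaussian approximation theorem block-by-block under the present hypotheses, and your scheme breaks down exactly at the point where you invoke it.

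The paper's proof circumvents this by changing how Zaitsev's theorem is applied. Instead of treating the blocks $Y_{n,0},\ldots,Y_{n,F(n)-1}$ at a fixed dyadic level (which is what Gou\"ezel does in~\cite{GO}, and what your single-scale scheme amounts to), the paper applies the Gaussian approximation to the \emph{entire} collection $\{Y_{m,j}:(m,j)\prec(n,F(n)-1)\}$ at once. The point is that Zaitsev's theorem (Proposition~\ref{Cor3}) does not require each summand to be non-degenerate; it only requires a \emph{regrouping} of the summands into consecutive runs $\zeta_k$ whose covariances lie in a window $[100M^2\,\Id,\,100CM^2\,\Id]$. To produce such a regrouping one needs only a lower bound on the covariance of the \emph{total} sum, and that is exactly what~\eqref{CovCond} supplies (after transferring it to the $Y$'s via an $L^2$-version of the $X\!\leftrightarrow\!Y$ coupling, Proposition~\ref{L2 Version} and Corollary~\ref{CovarianceCor}). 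This is the structural modification of~\cite{GO} that makes the argument go through without~\eqref{SC}; your proposal misses it.
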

\begin{remark}\label{BW Rem}
In the scalar case $d=1$, \eqref{Var est} and the linear growth of the variance of $\sum_{j=1}^n A_j$ yields that the difference between the variances of $\sum_{j=1}^n Z_j$ and $\sum_{j=1}^n A_j$ is only $O(n^{a_p+\frac12+\delta})$, which is close to $O(n^{3/4+\delta})$ when $p$ is large. However,  \eqref{Var est1} yields that the difference between the variances is $O(n^{2a_p+\delta})$, which is close to $O(n^{\frac12+\delta})$ when $p$ is large. Moreover, using \eqref{Var est1} together with~\cite[Theorem 3.2 A]{HR}, we conclude that under \eqref{UniDec} in the scalar case, for any $\delta>0$ there is a coupling of $\{A_n\}$ with a standard Brownian motion $\{W(t):\,t\geq0\}$ such that
\begin{equation}\label{ZZZ}
\left|\sum_{j=1}^{n}A_n-W(V_n)\right|=o(n^{\frac14+\frac{1}{4(p-1)}+\delta})\quad\text{$\mathbb P$-a.s.},
\end{equation}
where $V_n=\text{Var}\left(\sum_{j=1}^n A_j\right)$.
Using only \eqref{Var est} and~\cite[Theorem 3.2 A]{HR}, we could have only concluded that the left hand side in~\eqref{ZZZ} is of magnitude $o(n^{a_p/2+\frac14+\delta})$, which is close to $n^{3/8}$ when $p$ is large, and not to $n^{1/4}$.
\end{remark}

\section{Proof of Theorem \ref{Gthm}}\label{PROOF}
The proof  is  a modification of the proof of Theorem 1.3 in \cite{GO}.

We consider the so-called  big and small blocks as  introduced in~\cite[p.1659]{GO}. Fix $\beta\in(0,1)$ and $\varepsilon\in(0,1-\beta)$. Furthermore, let  $f=f(n)=\lfloor \beta n \rfloor$. Then,  Gou\"ezel decomposes $[2^n,2^{n+1})$ into a union of $F=2^f$ intervals $(I_{n,j})_{0\leq j<F}$ of the same length, and $F$ gaps $(J_{n,j})_{0\leq j<F}$ between them. In other words, we have 
\[
[2^n, 2^{n+1})=J_{n,0}\cup I_{n, 0}\cup J_{n,1}\cup I_{n,1}\cup \ldots \cup J_{n, F-1}\cup I_{n, F-1}.
\]
Let us outline the construction of this decomposition. For $1\le j<F$, we write $j$ in the form $j=\sum_{k=0}^{f-1} \alpha_k(j)2^k$ with $\alpha_k \in \{0, 1\}$. We then take the smallest $r$ with the property that $\alpha_r(j)\neq 0$ and take $2^{\lfloor \epsilon n\rfloor}2^r$ to be the length of $J_{n,j}$. 
In addition, the length of $J_{n,0}$ is $2^{\lfloor \epsilon n\rfloor}2^f$. Finally, the length of each interval $I_{n,j}$ is $2^{n-f}-(f+2)2^{\lfloor \epsilon n\rfloor-1}$.

In addition,  we recall some notations from~\cite{GO} which we will also use.  We define a  partial order on  $\{(n,j):\,n\in\mathbb N,\,0\leq j<F(n)\}$ by writing  $(n,j)\prec (n',j')$ if the interval $I_{n,j}$ is to the left of $I_{n',j'}$. Observe that a sequence  $((n_k,j_k))_k$ tends to infinity for this order if and only if $n_k\to\infty$. Moreover, let 
\[
X_{n,j}:=\sum_{\ell\in I_{n,j}}A_\ell
\]
and \[\mathcal I:=\bigcup_{n,j}I_{n,j} \quad  \text{and} \quad \mathcal J:=\bigcup_{n,j}J_{n,j}.\]

Finally, let us recall~\cite[Proposition 5.1]{GO}.
\begin{proposition}\label{Prop5.1}
There exists a coupling between $(X_{n,j})$ and a family of independent random vectors $(Y_{n,j})$ such that  $Y_{n,j}$ and  $X_{n,j}$ are equally distributed and almost surely, when $(n,j)$ tends to infinity, 
\begin{equation}\label{Coup}
\left|\sum_{(n',j')\prec (n,j)}X_{n',j'}-Y_{n',j'}\right|=o(2^{(\beta+\varepsilon)n/2}).
\end{equation}
\end{proposition}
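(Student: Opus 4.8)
The plan is to reproduce, in the present notation, the construction of~\cite[\S5]{GO}: the proof of~\cite[Proposition 5.1]{GO} uses only the spectral mixing property~\eqref{HH} together with the $L^p$-boundedness of $\{A_n\}$ (the covariance condition~\eqref{CovCond} plays no role here), both of which are available, so the argument transfers without change. First I would record the two quantitative inputs that feed the construction. \emph{(i) Decorrelation of the big blocks.} Since consecutive intervals $I_{n,j}$ are separated by gaps $J_{n,j}$ of length at least $2^{\lfloor\varepsilon n\rfloor}$, while the blocks have length at most $2^{n+1}$, property~\eqref{HH} — applied with the first group of intervals taken to be a (controlled) union of blocks lying to the left of a given gap and the second group the blocks to its right — shows that the joint characteristic function of any finite collection of the $X_{n,j}$'s, restricted to $\{|t|\le\varepsilon_0\}$, factorizes up to an error that is super-exponentially small in $n$; it is precisely here that the dyadic pattern $2^{\lfloor\varepsilon n\rfloor}2^{r(j)}$ of the gap lengths keeps the combinatorial factor $(1+\max|b_{j+1}-b_j|)^{C(n+m)}$ in~\eqref{HH} under control when the blocks are merged hierarchically. \emph{(ii) Moment bound.} From the $L^p$-boundedness of $\{A_n\}$ and the weak-dependence consequences of~\eqref{HH} (the Rosenthal-type estimates of~\cite[\S4]{GO}) one obtains $\|X_{n,j}\|_{L^p}\le C|I_{n,j}|^{1/2}\le C2^{(1-\beta)n/2}$, which controls the probability that a given block is atypically large.

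Next I would carry out the recursive construction itself. The independent vectors $(Y_{n,j})$ are built in the order $\prec$ (equivalently, following~\cite{GO}, by merging dyadic groups of blocks within each level and then across levels), each step invoking the conditional coupling lemma of Berkes--Philipp type isolated in~\cite[\S3]{GO}: given that the conditional characteristic function of the next block (or group of blocks), with respect to the $\sigma$-algebra generated by everything constructed so far, is uniformly close on $\{|t|\le\varepsilon_0\}$ to the unconditional one, one can realize on an enlarged probability space a vector $Y_{n,j}$ having the law of $X_{n,j}$, independent of the past — hence, by induction, the family $(Y_{n,j})$ is jointly independent — and satisfying a bound on $\mathbb E\min(1,|X_{n,j}-Y_{n,j}|)$ in terms of the characteristic-function discrepancy from (i), the dimension $d$, the radius $\varepsilon_0$, and the tail of $X_{n,j}$ from (ii). Feeding (i) and (ii) into this lemma yields $\mathbb E\min(1,|X_{n,j}-Y_{n,j}|)\le\varepsilon_n$ with $\sum_n 2^{\beta n}\varepsilon_n<\infty$; a Borel--Cantelli argument then gives $X_{n,j}-Y_{n,j}\to 0$ almost surely, and summing the increments over the (at most a constant times $2^{\beta n}$) blocks of each level — using the dyadic gap pattern to organize a maximal/chaining bound, so that the estimate holds uniformly in $j$ and not only at the level endpoints — upgrades this to
\[
\Bigl|\sum_{(n',j')\prec(n,j)}X_{n',j'}-Y_{n',j'}\Bigr|=o\bigl(2^{(\beta+\varepsilon)n/2}\bigr)\quad\text{a.s.}
\]
as $(n,j)\to\infty$, which is~\eqref{Coup}.

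The hard part, and the technical heart of~\cite[\S5]{GO}, is the conditional coupling lemma: extracting a genuine pathwise coupling with small $\mathbb E\min(1,|\cdot|)$ out of characteristic-function information available only on the bounded frequency ball $|t|\le\varepsilon_0$ rather than on all of $\mathbb R^d$, and then propagating the per-step coupling errors through the recursion so that they do not accumulate. Both are handled in~\cite{GO} through the interplay of the super-exponentially small decorrelation errors from (i) and the polynomial moment control from (ii); since this uses nothing beyond~\eqref{HH} and $L^p$-boundedness, the argument of~\cite{GO} applies in our setting verbatim.
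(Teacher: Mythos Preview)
Your proposal is correct and aligned with the paper's treatment: the paper does not prove this proposition at all but simply recalls it as \cite[Proposition~5.1]{GO}, since Gou\"ezel's construction in \cite[\S5]{GO} uses only the spectral mixing property~\eqref{HH} and the $L^p$-boundedness of $\{A_n\}$ (via \cite[Proposition~4.1]{GO}), not the covariance hypothesis. Your roadmap of that construction---the hierarchical decorrelation estimates from~\eqref{HH}, the smoothing/coupling machinery of \cite[\S3]{GO} to pass from characteristic-function closeness on $\{|t|\le\varepsilon_0\}$ to pathwise closeness, and the Borel--Cantelli conclusion---is an accurate summary of \cite[\S5]{GO}.
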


The first modification (in comparison to~\cite{GO}) that  we need is a certain $L^2$-version of Proposition~\ref{Prop5.1}.

\begin{proposition}\label{L2 Version}
There exists a coupling between $(X_{n,j})$ and $(Y_{n,j})$ from Proposition \ref{Prop5.1} such that (in addition to~\eqref{Coup}), for  some $C>0$ and all $n\in \N$ we have that
\[
\left\|\sum_{(n',j')\prec (n,j)}X_{n',j'}-Y_{n',j'}\right\|_{L^2}\leq C2^{\beta n/2}.
\]
\end{proposition}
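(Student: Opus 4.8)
The plan is to track the $L^2$-norm through the same construction that yields Proposition~\ref{Prop5.1}, rather than only the almost-sure bound. Gou\"ezel's Proposition 5.1 is proved by an inductive/dyadic coupling: at each scale $n$ one couples the big blocks $X_{n,j}$ (which, thanks to the gaps $J_{n,j}$ of size $\asymp 2^{\lfloor\epsilon n\rfloor}$ and the mixing assumption \eqref{HH}, are exponentially close to independent) with genuinely independent vectors $Y_{n,j}$ having the same marginals, using a Berkes--Philipp / Strassen-type coupling lemma. That coupling lemma comes with a quantitative $L^2$ (or in-probability) estimate on the discrepancy at each scale, not just an a.s. statement; the a.s. bound in \eqref{Coup} is obtained by summing these scale-by-scale errors and invoking Borel--Cantelli. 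So the first step is to revisit that construction and record, at scale $n$, the bound on $\big\|\sum_{j}(X_{n,j}-Y_{n,j})\big\|_{L^2}$ (or on the partial sums within scale $n$) that the coupling lemma provides, in terms of the number of blocks $F=2^{\lfloor\beta n\rfloor}$, the size of each block $\asymp 2^{n-f}$, the $L^p$-bound on the $A_\ell$, and the mixing error $e^{-ck}$ with $k\asymp 2^{\lfloor\epsilon n\rfloor}$.

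The second step is to sum over scales. Since $X_{n,j}$ is a sum of $\asymp 2^{n-f}=2^{n(1-\beta)}$ terms bounded in $L^p\subset L^2$, with the block's own internal correlations, one expects $\|X_{n,j}\|_{L^2}=O(2^{(n-f)/2})$ after using a summation-by-parts / maximal-type estimate on $\Cov$ (here is exactly where a uniform control like \eqref{UniDec} or a weaker consequence of \eqref{HH} enters to make the variance of a block grow at most linearly in its length). Summed over the $F=2^f$ blocks at scale $n$ in the triangle inequality for $L^2$, the total at scale $n$ is at most $O(2^f\cdot 2^{(n-f)/2})$; but the coupling error is much smaller than the trivial bound because the $Y_{n,j}-X_{n,j}$ discrepancies are tiny in probability — the mixing term $e^{-c2^{\lfloor\epsilon n\rfloor}}$ beats every polynomial factor — so after the coupling lemma the scale-$n$ contribution to the $L^2$-norm is dominated by a term of order $2^{\beta n/2}$ coming from the ``cost'' of the in-distribution matching (the $2^{(\beta+\epsilon)n/2}$ in \eqref{Coup} has an $\epsilon$-loss that is an artifact of the a.s./Borel--Cantelli argument, not of the $L^2$ estimate). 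Summing the geometric-type series $\sum_{m\le n}2^{\beta m/2}=O(2^{\beta n/2})$ then gives the claimed bound $C2^{\beta n/2}$.

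The third step is to combine the two couplings into a single one, so that \eqref{Coup} and the new $L^2$-bound hold simultaneously for the same pair $((X_{n,j}),(Y_{n,j}))$: one simply runs the construction once, extracting both estimates from the same coupling lemma applications, and verifies that the Borel--Cantelli step for \eqref{Coup} is compatible with (indeed, a consequence of control on) the per-scale $L^2$-errors. The main obstacle I anticipate is the bookkeeping in the second step: making precise that the quantitative coupling lemma used to build the $Y_{n,j}$ delivers an $L^2$-discrepancy small enough to kill the $\epsilon$-loss — i.e. extracting from \cite{GO}'s proof (or re-deriving) the statement that the scale-$n$ coupling error in $L^2$ is $o(2^{\beta n/2})$ uniformly, which requires that the approximate-independence error $e^{-ck}$ with $k\asymp 2^{\lfloor\epsilon n\rfloor}$ absorbs the polynomial prefactor $(1+\max|b_{j+1}-b_j|)^{C(n+m)}$ appearing in \eqref{HH}, together with the $L^p\to L^2$ truncation losses. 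Everything else (triangle inequality over scales, the geometric sum, the block-variance bound) is routine given the hypotheses of Theorem~\ref{Gthm}.
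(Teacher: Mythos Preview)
Your overall strategy---extract a per-scale $L^2$ bound on $\sum_j(X_{n,j}-Y_{n,j})$ and then sum geometrically over scales---is exactly what the paper does. But two concrete ingredients are missing from your plan, and without them the bound $2^{\beta n/2}$ will not materialize.

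First, Gou\"ezel's coupling in the proof of his Proposition~5.1 is not built for $X_{n,j}$ and $Y_{n,j}$ directly, but for the \emph{smoothed} variables $\tilde X_{n,j}=X_{n,j}+V_{n,j}$ and $\tilde Y_{n,j}=Y_{n,j}+V'_{n,j}$, where the $V_{n,j},V'_{n,j}$ are i.i.d.\ copies of a fixed symmetric vector $V$ (his Proposition~3.8). The dominant contribution $2^{\beta n/2}$ at scale $n$ does \emph{not} come from the ``cost of the in-distribution matching'' as you say; it comes from removing the smoothing, via $\bigl\|\sum_{j<F(n)}V_{n,j}\bigr\|_{L^2}^2=F(n)\|V\|_{L^2}^2\le 2^{\beta n}\|V\|_{L^2}^2$. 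The coupling discrepancy $\|\sum_j(\tilde X_{n,j}-\tilde Y_{n,j})\|_{L^2}$ itself is actually $O(1)$.

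Second, your passage from ``tiny in probability'' to ``small in $L^2$'' needs a specific mechanism, and ``$L^p\to L^2$ truncation losses'' is not quite it. The paper splits according to the events $\Gamma_j=\{|\tilde X_{n,j}-\tilde Y_{n,j}|\ge C_s s^{-n}\}$: on $\bigcap_j\Gamma_j^c$ the sum is at most $C_s F(n)s^{-n}$; on $\bigcup_j\Gamma_j$ one applies Cauchy--Schwarz to get $\mathbb P(\bigcup_j\Gamma_j)^{1/2}\bigl\|\sum_j(\tilde X_{n,j}-\tilde Y_{n,j})\bigr\|_{L^4}^2$. The $L^4$ norm here is only bounded by the crude triangle inequality, giving $O(2^{2n})$, so one needs $\mathbb P(\bigcup_j\Gamma_j)^{1/2}\le (F(n)C_s s^{-n})^{1/2}$ to beat $2^{4n}$. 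Gou\"ezel's proof gives the bound $\mathbb P(\Gamma_j)\le C_s s^{-n}$ with $s=4$, which is not enough; the paper's key observation is that his argument (Lemmas~5.2 and~5.4) actually yields Prokhorov-distance bounds of the form $e^{-\delta_1 2^{n\delta_2}}$, so $s$ can be taken arbitrarily large, and taking $s>32$ makes the bad-set contribution bounded in $n$. This upgrade from $s=4$ to $s>32$ is the actual work you would need to carry out.
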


\begin{proof}
We will show that we can couple $(X_{n,j})$ and $(Y_{n,j})$ so that 
\begin{equation}\label{L2est1}
\left\|\sum_{j=0}^{F(n)-1}(X_{n,j}-Y_{n,j})\right\|_{L^2}\leq C2^{\beta n/2}
\end{equation}
for every $n\in \N$, but it will be clear from the arguments in the proof that the same estimate holds true for the sum of $X_{n,j}-Y_{n,j}$, $j=0,1,...,j'$  where $j'<F(n)$. Using this extended version of \eqref{L2est1}, it is clear that Proposition \ref{L2 Version} follows.

Let $\tilde{X}_{n,j}=X_{n,j}+V_{n,j}$ and $\tilde{Y}_{n,j}=Y_{n,j}+V'_{n,j}$, where the $V_{n,j}$'s and  the $V_{n,s}'$ are independent copies of the symmetric random vector $V$ constructed in~\cite[Proposition 3.8]{GO}, which are independent of everything else and from each other (enlarging  our probability space if necessary). 
We will first return to the arguments in Step~1 of the  proof of~\cite[Theorem 1.3]{GO} (i.e. to the proof of Proposition \ref{Prop5.1}), and show that for every $s>4$ there exists a constant $C_s>0$ and a coupling between the $\tilde{X}_{n,j}$'s and the $\tilde{Y}_{n,j}$'s such that for all $0\leq j<F(n)$ we have that
\begin{equation}\label{APROX2}
\mathbb P(|\tilde{X}_{n,j}-\tilde{Y}_{n,j}|\geq C_s s^{-n})\leq C_s s^{-n}.
\end{equation}
Indeed, this was proved for $s=4$ in~\cite[Section 5]{GO} (see~\cite[p.1663]{GO}), but taking a careful look at the proofs of~\cite[Lemma 5.2]{GO} and~\cite[Lemma 5.4]{GO} one observes  that $4^n$ can be replaced with $s^n$, for any $s>4$, since the upper bounds there on the Prokhorov distances appearing in the proofs of these lemmas  have the form $e^{-\delta_1 2^{-n\delta_2}}$ for some positive $\delta_1$ and $\delta_2$. 

Next, set $\Gamma_j=\Gamma_{j,n,s}=\{|\tilde{X}_{n,j}-\tilde{Y}_{n,j}|\geq C_s s^{-n}\}$, where $0\leq j<F(n)$.
Then, by applying the Cauchy-Schwartz  inequality,  we have that 
\[
\begin{split}
\left\|\sum_{j=0}^{F(n)-1}(\tilde{X}_{n,j}-\tilde{Y}_{n,j})\right\|_{L^2}^2 &\le
2\mathbb P^2(\cap_{j}\Gamma_j^c)(C_sF(n)s^{-n})^2+2\mathbb E\big[\mathbb I_{\cup_j\Gamma_j}\big (\sum_{j=0}^{F(n)-1}(\tilde{X}_{n,j}-\tilde{Y}_{n,j})\big)^2\big] \\
&\le 2(C_s2^{\beta n}s^{-n})^2+2\mathbb P^{\frac 12}(\cup_j\Gamma_j)\left\|\sum_{j=0}^{F(n)-1}(\tilde{X}_{n,j}-\tilde{Y}_{n,j})\right\|_{L^4}^2 \\
&=:I_1+I_2,
\end{split}
\]
where $\mathbb I_\Gamma$ denotes the indicator function of a set $\Gamma$, $\Gamma^c$ denotes its complement in the underlying probability space and $\mathbb P^r(\Gamma)=\big( \mathbb P(\Gamma)\big)^r$ for  $r>0$.
Observe that when $s>4$,  we have that 
\[
I_1\leq C4^{-n},
\]
for some $C>0$. 
On the other hand, since the $L^4$-norms of $\tilde{X}_{n,j}$ and $\tilde{Y}_{n,j}$ are bounded by $c|I_{n,j}|$ (where $c$ is some constant), then
\[
\left\|\sum_{j=0}^{F(n)-1}(\tilde{X}_{n,j}-\tilde{Y}_{n,j})\right\|_{L^4}^2\leq 
\bigg(2c\sum_{j=0}^{F(n)-1}|I_{n,j}|\bigg)^2\leq C2^{2n}
\]
where $C>0$ is some constant. Note that  in the above inequality we have used that the sum of all the lengths of intervals $I_{n,j}$, $0\le j< F(n)$ does not exceed $2^{n+1}$.
The above inequality together with~(\ref{APROX2}) implies that 
\[
I_2\leq (F(n)C_s s^{-n})^{\frac12}\cdot C2^{2n}.
\]
Thus, $I_2$ is bounded in $n$ when $s>32$.

Finally, since $V$ is symmetric and the $V_{n,j}$'s are i.i.d., we have that 
\[
\left\|\sum_{j=0}^{F(n)-1}V_{n,j}\right\|_{L^2}^2=F(n)\|V\|_{L^2}^2\leq 2^{\beta n}\|V\|_{L^2}^2,
\]
which together with the above estimates on $I_1$ and $I_2$ completes the proof of the proposition.
\end{proof}

We now derive the following corollary. 

\begin{corollary}\label{CovarianceCor}
There exists  a constant $c>0$ such that for every sufficiently large $n\in \N$ and all $v\in\mathbb R^d$ we have that 
\begin{equation}\label{CovCond2}
\Cov\bigg(\sum_{m=1}^n\sum_{j=0}^{F(m)-1}Y_{m,j}\bigg)v\cdot v\geq c 2^n|v|^2.
\end{equation}
\end{corollary}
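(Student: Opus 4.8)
The plan is to derive \eqref{CovCond2} by comparing the covariance of $\sum_{m=1}^n\sum_{j}Y_{m,j}$ with that of $\sum_{m=1}^n\sum_j X_{m,j}=\sum_{\ell\in\mathcal I\cap[1,2^{n+1})}A_\ell$, and then relating the latter to $\Cov(\sum_{\ell=1}^{2^{n+1}-1}A_\ell)$, which grows like $2^n$ by assumption~\eqref{CovCond}. First I would fix a unit vector $v$ and work with the scalar sequence $A_\ell\cdot v$. Write $S=\sum_{m=1}^n\sum_j Y_{m,j}$ and $S'=\sum_{m=1}^n\sum_j X_{m,j}$. Since $Y_{m,j}$ and $X_{m,j}$ are equidistributed and we have the $L^2$-coupling from Proposition~\ref{L2 Version}, we get $\|S\cdot v-S'\cdot v\|_{L^2}\leq C2^{\beta n/2}$. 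By the triangle inequality in $L^2$ (applied to the centered variables $S\cdot v$ and $S'\cdot v$, both of which are centered since the $A_\ell$ are), this gives
\[
\big|\,\|S\cdot v\|_{L^2}-\|S'\cdot v\|_{L^2}\,\big|\leq C2^{\beta n/2},
\]
hence $\Cov(S)v\cdot v=\|S\cdot v\|_{L^2}^2\geq \|S'\cdot v\|_{L^2}^2-2C2^{\beta n/2}\|S'\cdot v\|_{L^2}-C^22^{\beta n}$.

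The next step is to bound $\|S'\cdot v\|_{L^2}^2=\Cov(S')v\cdot v$ from below by something of order $2^n$. Here $S'\cdot v=\sum_{\ell\in\mathcal I}A_\ell\cdot v$ where $\mathcal I$ is the union of big blocks inside $[1,2^{n+1})$, whereas $\sum_{\ell=1}^{2^{n+1}-1}A_\ell\cdot v$ also includes the small-block contribution $\sum_{\ell\in\mathcal J}A_\ell\cdot v$. By construction the total length of $\mathcal J\cap[1,2^{n+1})$ is of order $2^{(\beta+\varepsilon)n}$ (summing the gap lengths $2^{\lfloor\varepsilon n\rfloor}2^r$), so since $\{A_\ell\}$ is bounded in $L^p\subset L^2$, we have $\|\sum_{\ell\in\mathcal J}A_\ell\cdot v\|_{L^2}\leq C'2^{(\beta+\varepsilon)n}$ — actually one should be slightly careful: without decay this naive bound uses the triangle inequality and gives $\|\sum_{\ell\in\mathcal J\cap[2^k,2^{k+1})}A_\ell\cdot v\|_{L^2}\le C(\text{length})$, and summing over $k\le n$ the lengths add up to $O(2^{(\beta+\varepsilon)n})$. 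Then
\[
\|S'\cdot v\|_{L^2}\geq \Big\|\sum_{\ell=1}^{2^{n+1}-1}A_\ell\cdot v\Big\|_{L^2}-C'2^{(\beta+\varepsilon)n}\geq \sqrt{c_1 (2^{n+1}-1)}-C'2^{(\beta+\varepsilon)n},
\]
using \eqref{CovCond}. Provided $\beta$ and $\varepsilon$ are chosen with $\beta+\varepsilon<1/2$ (which is allowed, since we only require $\varepsilon\in(0,1-\beta)$ and may shrink both), the term $C'2^{(\beta+\varepsilon)n}$ is negligible compared to $2^{n/2}$, so $\|S'\cdot v\|_{L^2}\geq c_2 2^{n/2}$ for large $n$, and therefore $\Cov(S')v\cdot v\geq c_2^2 2^n$.

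Combining the two displays, and noting that both correction terms $2^{\beta n/2}\|S'\cdot v\|_{L^2}\asymp 2^{\beta n/2}2^{n/2}=2^{(1+\beta)n/2}$ and $2^{\beta n}$ are $o(2^n)$ since $\beta<1$, we conclude $\Cov(S)v\cdot v\geq c\,2^n|v|^2$ for some $c>0$ and all sufficiently large $n$, uniformly in the unit vector $v$ (the constants $C,C'$ above do not depend on $v$ because $\{A_\ell\}$ is bounded in $L^p$ uniformly and \eqref{CovCond} is uniform in $v$). Rescaling to arbitrary $v\in\mathbb R^d$ gives \eqref{CovCond2}. The only mildly delicate point — the main thing to get right — is the bookkeeping on the small blocks: one must verify that the total length of the gaps up to level $n$ is genuinely $O(2^{(\beta+\varepsilon)n})$ and that $\beta,\varepsilon$ can be taken small enough that this, as well as the $L^2$-coupling error $2^{\beta n/2}$, stays below the $2^{n/2}$ threshold; everything else is the triangle inequality in $L^2$ together with the hypotheses already in force.
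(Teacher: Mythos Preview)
Your overall strategy matches the paper's: pass from $\sum Y_{m,j}$ to $\sum X_{m,j}$ via the $L^2$-coupling of Proposition~\ref{L2 Version}, and then from $\sum X_{m,j}$ to $\sum_{\ell<2^{n+1}}A_\ell$ by controlling the gap contribution $\sum_{\ell\in\mathcal J}A_\ell$, finally invoking~\eqref{CovCond}. The scalar-projection bookkeeping is fine and equivalent to the paper's matrix estimate $|\Cov(X)-\Cov(Z)|\le C_d\|X-Z\|_{L^2}(\|X\|_{L^2}+\|Z\|_{L^2})$.

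The genuine problem is in your bound on the gap sum. You use the naive triangle inequality to get $\bigl\|\sum_{\ell\in\mathcal J\cap[1,2^{n+1})}A_\ell\cdot v\bigr\|_{L^2}\le C'\,2^{(\beta+\varepsilon)n}$, and then you need this to be $o(2^{n/2})$, forcing $\beta+\varepsilon<1/2$. You write that we ``may shrink both'', but this is not allowed: $\beta$ and $\varepsilon$ are \emph{fixed} at the start of Section~\ref{PROOF}, and the corollary must hold for that fixed choice. Crucially, at the end of the proof of Theorem~\ref{Gthm} one takes $\beta=\frac{p}{2(p-1)}$, which lies in $(1/2,2/3)$ for every $p>4$. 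So the constraint $\beta<1/2$ your argument imposes is never satisfied in the regime where the corollary is actually used, and the argument as written does not establish the statement.

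The paper closes this gap by invoking~\cite[Proposition 4.1]{GO}, which exploits the mixing hypothesis~\eqref{HH} to give the square-root-type bound $\|X-Z\|_{L^2}\le C\,2^{n(1-\gamma)/2}$ for some $\gamma\in(0,1)$ (here $X-Z$ is the gap sum, containing of order $2^{(1-\gamma)n}$ terms with $1-\gamma=\beta+\varepsilon<1$). This is $o(2^{n/2})$ for \emph{every} admissible $\beta\in(0,1)$, $\varepsilon\in(0,1-\beta)$, with no extra restriction. Once you replace your naive estimate by this one, the rest of your argument goes through unchanged.
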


\begin{proof}
Firstly, observe that for any two centered random vectors $X=(X_i)_{i=1}^d$ and $Z=(Z_i)_{i=1}^d$, which are defined on the same probability space, and all $1\leq i,j\leq d$ we have that 
\[
|\mathbb E[X_iX_j]-\mathbb E[Z_iZ_j]|\leq \|X_i\|_{L^2}\|X_j-Z_j\|_{L^2}+
\|Z_j\|_{L^2}\|X_i-Z_i\|_{L^2}.
\]
It follows that
\[
|\Cov(X)-\Cov(Z)|\leq C_d\|X-Z\|_{L^2}(\|X\|_{L^2}+\|Z\|_{L^2}),
\]
where $C_d>0$ is a constant which depends only on $d$.
Let $X=\sum_{m=1}^{2^{n+1}}A_m$ and $Z=\sum_{m=1}^n\sum_{j=0}^{F(m)-1}X_{m,j}$. Then, by \cite[Proposition 4.1]{GO} we have that
\[
\|X-Z\|_{L^2}\leq C2^{n(1-\gamma)/2},
\]
for some constants  $C>0$ and $\gamma\in(0,1)$ which do not depend on $n$. Indeed, we observe that  the number of $A_m$'s appearing in $X-Z$ is at most of order $2^{(1-\gamma)n}$, $0<\gamma<1$. Furthermore, by \cite[Proposition 4.1]{GO}, the $L^2$-norms of $X$ and $Z$ are  of order at most $2^{n/2}$, and hence 
\[
|\Cov(X)-\Cov(Z)|\leq C'2^{n(1-\gamma)/2+n/2},
\]
where $C'>0$ does not depend on $n$. Using (\ref{CovCond}) we conclude that for every $v\in\mathbb R^d$ and a sufficiently large $n$, we have 
\begin{equation}\label{Approx0}
\Cov\bigg(\sum_{m=1}^n\sum_{j=0}^{F(m)-1}X_{m,j}\bigg)v\cdot v\geq \big(c_12^n-C'2^{n(1-\gamma)/2+n/2}\big)|v|^2\geq C_12^n|v|^2
\end{equation}
where $C_1>0$ is some constant. 
The corollary follows from (\ref{Approx0}) together with Proposition \ref{L2 Version}.
\end{proof}

We now recall the following proposition (see~\cite[Corollary 3]{Zai} or~\cite[Proposition 5.5]{GO}).
\begin{proposition}\label{Cor3}
Let $Y_0,\ldots ,Y_{b-1}$ be independent centered $\mathbb R^d$-valued random vectors. Let $q\geq2$ and set $M=\big(\sum_{j=0}^{b-1}\mathbb E|Y_j|^q\big)^{1/q}$. Assume that there exists a sequence $0=m_0<m_1<\ldots<m_s=b$ such that with $\zeta_k=Y_{m_k}+...+Y_{m_{k+1}-1}$ and $B_k=\Cov(\zeta_k)$, for every $v\in\mathbb R^d$ and $0\leq k<s$ we have that
\begin{equation}\label{BLOCKS}
100M^2|v|^2\leq B_kv\cdot v\leq 100CM^2|v|^2,
\end{equation}
where $C\geq1$ is some constant. Then, there exists a coupling between $(Y_0,\ldots,Y_{b-1})$ and a sequence of independent Gaussian random vectors $(S_0,\ldots ,S_{b-1})$ such that $\Cov(S_j)=\Cov(Y_j)$ for each $j\in \N$ and 
\begin{equation}\label{z}
\mathbb P\left(\max_{0\leq i\leq b-1}\left|\sum_{j=0}^{i}Y_j-S_j\right|\geq Mz\right)\leq C'z^{-q}+\exp(-C'z),
\end{equation}
for all $z\geq C'\log s$. Here, $C'$ is a positive constant which depends only of $C$,  $d$ and  $q$.
\end{proposition}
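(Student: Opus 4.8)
\textbf{Plan of proof for Theorem~\ref{Gthm}.}
The plan is to follow Gou\"ezel's strategy from the proof of \cite[Theorem 1.3]{GO}, with the two structural modifications already prepared above: Proposition~\ref{L2 Version} (the $L^2$-version of the block coupling) and Corollary~\ref{CovarianceCor} (the non-degeneracy of the covariance of the independent big-block surrogates $Y_{m,j}$). First I would assemble the big blocks $Y_{m,j}$ from Proposition~\ref{Prop5.1} and group them into super-blocks indexed by the level $n$, i.e.\ consider the independent vectors $\widehat Y_n:=\sum_{j=0}^{F(n)-1}Y_{n,j}$; these have $L^p$-norm $O(2^{n/2})$ (via \cite[Proposition 4.1]{GO}) and, by Corollary~\ref{CovarianceCor}, $\Cov(\sum_{m\le n}\widehat Y_m)v\cdot v\ge c\,2^n|v|^2$. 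To invoke Proposition~\ref{Cor3} on the first $b=b(N)$ of the $Y_{n,j}$'s I would choose the auxiliary partition $0=m_0<m_1<\dots<m_s=b$ by cutting at the level boundaries, so that each $\zeta_k$ is a sum of consecutively many $\widehat Y_m$'s chosen to make the lower and upper covariance bounds \eqref{BLOCKS} hold with $M=\big(\sum\mathbb E|Y_{n,j}|^q\big)^{1/q}$; here the linear lower bound from Corollary~\ref{CovarianceCor} is exactly what guarantees such a grouping exists, while the $L^p$ bound controls $M$ and hence the upper bound. This yields a Gaussian coupling $(S_{n,j})$ with $\Cov(S_{n,j})=\Cov(Y_{n,j})$ and the maximal inequality \eqref{z}; optimizing $z$ in \eqref{z} across dyadic scales and summing over $n$ (a Borel--Cantelli argument as in \cite{GO}) produces the almost sure bound $o(2^{(1/2)n}\cdot(\text{polynomial correction}))$ on partial sums of $Y_{n,j}-S_{n,j}$, which combined with \eqref{Coup} and the passage from $N$ to the nearest dyadic block gives \eqref{asip rate} with exponent $a_p+\delta$, after optimizing $\beta,\varepsilon$ and the block-size exponents against $p$ exactly as in \cite{GO}. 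The Gaussian vectors $Z_j$ are then defined by distributing $S_{n,j}$ over the indices $\ell\in I_{n,j}$ and inserting independent centered Gaussians of the appropriate covariance on the gap indices $\ell\in\mathcal J$; independence and centeredness are immediate from the construction.

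Second, I would upgrade the almost sure statement to the $L^2$ bound \eqref{Var est}. Here the point is that \eqref{z} is a tail bound, not an $L^2$ bound, so I would run the same coupling but estimate $\|\sum_{j\le i}(Y_j-S_j)\|_{L^2}$ directly: split the expectation over the good event $\{|\sum_{j\le i}(Y_j-S_j)|\le Mz_0\}$ and its complement, use \eqref{z} with a threshold $z_0$ comparable to the target rate to bound the contribution of the bad event via Cauchy--Schwarz together with the crude $L^4$ (indeed $L^p$) bounds on $Y_{n,j}$ and $S_{n,j}$ — this is the same mechanism as in the proof of Proposition~\ref{L2 Version}, where the bad-event contribution was shown to be negligible once $s$ (equivalently the Prokhorov scale) is taken large enough. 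Adding the $L^2$-error from Proposition~\ref{L2 Version} (which is $O(2^{\beta n/2})$, subsumed into $n^{a_p+\delta}$ after optimizing exponents) and the gap-block contribution (of order $2^{(\beta+\varepsilon)n/2}$, again subsumed) gives \eqref{Var est}.

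Third, for the sharper variance comparison \eqref{Var est1} under the exponential-decay hypothesis \eqref{UniDec}, I would fix a unit vector $v$ and work with the scalar sequence $A_n\cdot v$. Write $\|\sum_{j=1}^n Z_j\cdot v\|_{L^2}^2 - \|\sum_{j=1}^n A_j\cdot v\|_{L^2}^2 = (\,\|T_n\|_{L^2}-\|U_n\|_{L^2}\,)(\,\|T_n\|_{L^2}+\|U_n\|_{L^2}\,)$ with $T_n=\sum Z_j\cdot v$, $U_n=\sum A_j\cdot v$; the difference of norms is at most $\|T_n-U_n\|_{L^2}\le Cn^{a_p+\delta'}$ by \eqref{Var est}, and the sum of norms is $O(n^{1/2})$ because \eqref{UniDec} gives $\mathrm{Var}(\sum_{j\le n}A_j\cdot v)\le C n$ and then $\|T_n\|_{L^2}\le\|U_n\|_{L^2}+Cn^{a_p+\delta'}=O(n^{1/2})$. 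Multiplying, the gap between the variances is $O(n^{a_p+1/2+\delta})$ — but this is only the weaker bound noted in Remark~\ref{BW Rem}, so to reach $O(n^{2a_p+\delta})$ one must instead compare covariances \emph{blockwise}: since $\Cov(S_{n,j})=\Cov(Y_{n,j})=\Cov(X_{n,j})$ by construction, the only discrepancy between $\mathrm{Var}(\sum Z_j\cdot v)$ and $\mathrm{Var}(\sum A_j\cdot v)$ comes from (i) cross-covariances between distinct big blocks and between big blocks and gap indices, which \eqref{UniDec} makes exponentially small in the gap lengths, and (ii) the within-gap terms, whose total length is $\sum_n 2^{\lfloor\varepsilon n\rfloor}\cdot(\text{number of gaps})=O(2^{(\beta+\varepsilon)n})$ up to scale $2^n$; optimizing $\beta,\varepsilon$ as before bounds both by $O(n^{2a_p+\delta})$, and the matching lower bound for $\|\sum Z_j\cdot v\|_{L^2}^2$ follows symmetrically.

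\textbf{Main obstacle.} The crux is the application of Proposition~\ref{Cor3}: one must choose the grouping $\{m_k\}$ of big blocks so that every super-block covariance $B_k$ is pinched between $100M^2|v|^2$ and $100CM^2|v|^2$ \emph{uniformly in $v$ and with a constant $C$ independent of $N$}. The lower bound is supplied by Corollary~\ref{CovarianceCor}, but making the upper bound hold with a uniform $C$ requires carefully balancing the number of big blocks absorbed into each super-block against the growth of $M$ and the cross-covariance contributions — and then verifying that the resulting number $s$ of super-blocks grows only polynomially in $N$, so that the condition $z\ge C'\log s$ in \eqref{z} does not degrade the rate. This bookkeeping, together with the simultaneous optimization of the four exponents $\beta$, $\varepsilon$, and the two block-length exponents against the constraint $p>4$ to land exactly on $a_p=\frac{p}{4(p-1)}$, is the technically delicate part; everything else is a faithful adaptation of \cite{GO} once the degeneracy of the covariance has been ruled out.
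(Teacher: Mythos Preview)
Your proposal does not address the statement you were asked to prove. Proposition~\ref{Cor3} is not proved in the paper at all: it is simply \emph{recalled} from the literature (it is \cite[Corollary~3]{Zai}, restated as \cite[Proposition~5.5]{GO}), and the paper gives no argument for it. What you have written is instead a plan of proof for Theorem~\ref{Gthm}, the main abstract ASIP, which \emph{uses} Proposition~\ref{Cor3} as a black box.

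If your actual intent was to sketch the proof of Theorem~\ref{Gthm}, then your outline is essentially faithful to the paper's argument and organized the same way: couple $X_{n,j}$ to independent $Y_{n,j}$ (Proposition~\ref{Prop5.1} and its $L^2$ upgrade Proposition~\ref{L2 Version}), use Corollary~\ref{CovarianceCor} to guarantee the covariance lower bound, apply Proposition~\ref{Cor3} to the family $\{Y_{m,j}:(m,j)\prec(n,F(n)-1)\}$ to produce the Gaussian coupling (this is exactly Lemma~\ref{Lemma 5.6}), and then run Borel--Cantelli and the gap/interpolation estimates from \cite{GO}. Two small corrections relative to the paper: (i) the grouping for \eqref{BLOCKS} is not done level-by-level via $\widehat Y_n$ but greedily over all $Y_{m,j}$ with $(m,j)\prec(n,F(n)-1)$, using that each individual $\Cov(Y_{m,j})$ is bounded above by $M^2$ while the total covariance dominates $c2^n\gg M^2$; (ii) for \eqref{Var est1} the paper does exactly your ``blockwise'' comparison, exploiting $\Cov(S_{n',j'})=\Cov(X_{n',j'})$ together with \eqref{UniDec} to kill the cross-block covariances --- your first attempt via the triangle inequality is, as you correctly note, only the weaker $O(n^{a_p+1/2+\delta})$ bound.
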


Now we can describe our next  modification of \cite{GO}.
In the proof of~\cite[Lemma 5.6]{GO}, Proposition~\ref{Cor3} was applied with $(Y_{n,0},\ldots ,Y_{n,F(n)-1})$. A key ingredient in the proof of this Lemma 5.6 was that the covariance matrix of each $Y_{n,j}$ is bounded from below (in the ordered set of semi-definite positive matrices) by an expression of the form $2^{(1-\beta)n}\Sigma^2(1+o(1))$, where $\Sigma$ is some positive definite matrix. In our circumstances we only have Corollary \ref{CovarianceCor}, and so we will be able to apply Proposition \ref{Cor3} successfully only with $\{Y_{m,j}\},\,(m,j)\prec(n,F(n)-1)$, as described in the following lemma.

\begin{lemma}\label{Lemma 5.6}
For every $n\in\mathbb N$, there exists a coupling between $\{Y_{m,j}\}$  and $\{S_{m,j}\}$, where $1\leq m\leq n$ and $1\leq j<F(m)$ and   $S_{n,j}$'s are independent centered Gaussian random variables with $Var(S_{n,j})=Var(Y_{n,j})$, such that
\begin{equation}\label{(5.11)}
\sum_{n} \mathbb P\left(\max_{(k,i)\prec (n,F(n)-1)}\left|\sum_{m=1}^k\sum_{j=0}^{i-1}Y_{k,j}-S_{k,j}\right|\geq 2^{\big((1-\beta)/2+\beta/p+\varepsilon/2\big)n}\right)<\infty.
\end{equation}
\end{lemma}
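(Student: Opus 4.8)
\textbf{Proof proposal for Lemma~\ref{Lemma 5.6}.}
The plan is to apply Proposition~\ref{Cor3} to the collection of big-block sums $\{Y_{m,j}\}$ with $(m,j)\prec(n,F(n)-1)$, reordered according to $\prec$ into a single finite sequence $Y_0,\dots,Y_{b-1}$, where $b=b(n)$ is the total number of such blocks (so $b$ is of order $\sum_{m\le n}F(m)\asymp 2^{\beta n}$, hence $\log b=O(n)$). The key is to choose the super-blocks $m_0<m_1<\dots<m_s$ in Proposition~\ref{Cor3} so that the hypothesis~\eqref{BLOCKS} holds with a uniform constant $M$, and then to translate the tail bound~\eqref{z} into the summable bound~\eqref{(5.11)}.

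First I would estimate the relevant scales. The number of big blocks contributing to scale $n$ is $F(n)=2^{\lfloor\beta n\rfloor}$, each $I_{n,j}$ has length of order $2^{(1-\beta)n}$, and since $\{A_\ell\}$ is bounded in $L^p$ one gets (via \cite[Proposition~4.1]{GO} applied on each block) $\|Y_{n,j}\|_{L^p}\le C\,2^{(1-\beta)n/2}$, and more generally $\mathbb E|Y_{m,j}|^p\le C\,2^{(1-\beta)mp/2}$ for all relevant $(m,j)$. Summing, $M^p=\sum_{(m,j)\prec(n,F(n)-1)}\mathbb E|Y_{m,j}|^p$ is dominated by the top scale, giving $M\le C\,2^{((1-\beta)/2+\beta/p)n}$ (and $M$ is also bounded below by the contribution of the single largest block, which is $\ge c\,2^{(1-\beta)n/2}$ — but the sharp exponent we actually need for the right-hand side of \eqref{(5.11)} is the upper one, $M\lesssim 2^{((1-\beta)/2+\beta/p)n}$). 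Next I would produce the super-block decomposition: using Corollary~\ref{CovarianceCor}, the covariance of the partial sum of all $Y_{m,j}$ up through scale $k$ grows like $2^k$ uniformly in the direction $v$ (both upper and lower bound, the upper bound again from \cite[Proposition~4.1]{GO}). So I would group consecutive blocks (in the $\prec$-order) into super-blocks $\zeta_k$ each of which spans roughly $cM^2$ worth of ``variance'' in every direction; concretely one can let $\zeta_k$ consist of all blocks between two consecutive scales $n_k$, choosing the $n_k$ spaced so that the accumulated covariance increment $\Cov(\zeta_k)v\cdot v$ lies in $[100M^2|v|^2,100CM^2|v|^2]$. The two-sided control needed for this is exactly what Corollary~\ref{CovarianceCor} (lower bound) and \cite[Proposition~4.1]{GO} (upper bound, $\|\cdot\|_{L^2}^2\lesssim 2^n$) provide; the number of super-blocks is then $s=O(2^n/M^2)$, so $\log s=O(n)$ as well.

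With \eqref{BLOCKS} verified, Proposition~\ref{Cor3} (with $q=p$) yields a Gaussian sequence $\{S_{m,j}\}$ with matching covariances and, for $z\ge C'\log s$,
\[
\mathbb P\left(\max_{(k,i)\prec(n,F(n)-1)}\left|\sum_{m=1}^k\sum_{j=0}^{i-1}(Y_{m,j}-S_{m,j})\right|\ge Mz\right)\le C'z^{-p}+\exp(-C'z).
\]
Now I would choose $z=z_n$ so that $Mz_n\le 2^{((1-\beta)/2+\beta/p+\varepsilon/2)n}$, i.e.\ $z_n\asymp 2^{\varepsilon n/2}$ (using $M\lesssim 2^{((1-\beta)/2+\beta/p)n}$); this $z_n$ dominates $C'\log s=O(n)$ for large $n$, so the hypothesis of Proposition~\ref{Cor3} is met. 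Then $\sum_n z_n^{-p}=\sum_n 2^{-\varepsilon pn/2}<\infty$ and $\sum_n\exp(-C'z_n)=\sum_n\exp(-C'2^{\varepsilon n/2})<\infty$, which gives~\eqref{(5.11)}. One final bookkeeping point: Proposition~\ref{Cor3} produces, for each $n$, a coupling of the finite family $\{Y_{m,j}:(m,j)\prec(n,F(n)-1)\}$; as in \cite[Lemma~5.6]{GO} one does not need these couplings to be consistent across $n$, since \eqref{(5.11)} is a statement about a sum of probabilities, and the actual coupling used later is assembled separately.

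\textbf{Main obstacle.} The delicate step is verifying the lower bound in~\eqref{BLOCKS} \emph{uniformly in $v$} for every super-block, i.e.\ making the choice of the $n_k$ robust. Corollary~\ref{CovarianceCor} controls the covariance of a partial sum from scale $1$ up to scale $k$, not of an individual super-block $\zeta_k$, so one must argue that the \emph{increments} of a matrix-valued sequence whose quadratic forms all grow $\asymp 2^n$ can be grouped into chunks each of controlled two-sided size in every direction — this requires a short argument handling the fact that ``$\Cov$ grows like $2^n|v|^2$ in each direction'' does not by itself make the increments comparable in the positive-semidefinite order. A clean way around it is to not insist on \eqref{BLOCKS} for the matrices exactly, but to pick $n_k$ greedily: let $n_{k+1}$ be the first scale at which $\min_{|v|=1}\Cov(\sum_{m\le n_{k+1},\,\text{all }j}Y_{m,j})v\cdot v$ exceeds the previous threshold by $100M^2$; the lower bound in~\eqref{BLOCKS} for $\zeta_k$ is then immediate, and the upper bound follows because one more scale adds at most $O(2^{n_{k+1}})=O(M^2)$ to every quadratic form by the $L^2$ estimate. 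This greedy construction, together with the a priori bound $s=O(2^n/M^2)$, is what I expect to take the most care to write correctly.
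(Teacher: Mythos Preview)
Your overall strategy coincides with the paper's: apply Proposition~\ref{Cor3} to the full family $\{Y_{m,j}:(m,j)\prec(n,F(n)-1)\}$ in $\prec$-order, use Corollary~\ref{CovarianceCor} for the lower covariance bound, estimate $M$ via~\cite[Proposition~4.1]{GO}, and then plug in $z$ growing like a small power of $2^n$. The paper takes $q\in(2,p)$ close to $p$ rather than your $q=p$, but this is cosmetic; your arithmetic with $z_n\asymp 2^{\varepsilon n/2}$ is in fact the cleaner route to the same bound.

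The one genuine problem is in your super-block construction. You propose to let each $\zeta_k$ consist of all blocks between two consecutive integer \emph{scales} $n_k<n_{k+1}$, chosen greedily so that the minimal eigenvalue of the cumulative covariance jumps by $100M^2$. This cannot work. First, the claim ``one more scale adds at most $O(2^{n_{k+1}})=O(M^2)$'' is false: since $M^2\asymp 2^{((1-\beta)+2\beta/p)n}$ with exponent strictly less than $1$ (as $p>2$), while the later $n_{k+1}$ must range up to $n$, a single full scale near the top contributes covariance of order $2^n\gg M^2$, destroying the upper bound in~\eqref{BLOCKS}. Second, and more basically, there are only $n$ integer scales available, whereas~\eqref{BLOCKS} forces $s\asymp 2^n/M^2$, which is a positive power of $2^n$; a scale-indexed partition simply cannot produce enough super-blocks. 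The paper instead works at the level of \emph{individual} blocks: the key observation is
\[
|\Cov(Y_{m,j})|\le\|Y_{m,j}\|_{L^2}^2\le\|Y_{m,j}\|_{L^q}^2\le M^2\qquad\text{for every }(m,j),
\]
and with this block-by-block control, together with $\sum_{m,j}\Cov(Y_{m,j})\ge c\,2^nI\gg M^2I$ from Corollary~\ref{CovarianceCor}, the regrouping into super-blocks satisfying~\eqref{BLOCKS} is carried out one $Y_{m,j}$ at a time rather than one scale at a time. Your diagnosis of where the difficulty lies is accurate, but the remedy should use the individual-block bound, not scale-level increments.
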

\begin{proof}
Take $q\in(2,p)$ and set 
\[
M=\Big(\sum_{m=1}^n\sum_{j=0}^{F(m)-1}\mathbb E|Y_{m,j}|^q\Big)^{\frac 1q}.
\]
By Proposition 4.1 in \cite{GO} we have 
\[
\|Y_{m,j}\|_{L^q}=\|X_{m,j}\|_{L^q}\leq C|I_{m,j}|^{\frac 12}\leq C'2^{(1-\beta)m/2},
\]
and therefore
\begin{equation}\label{M bound}
M\leq\sum_{m=1}^n\big(\sum_{j=0}^{F(m)-1}\mathbb E|Y_{m,j}|^q\big)^{\frac 1q}\leq 
C\sum_{m=1}^n (F(m))^{\frac 1q}2^{(1-\beta)m/2}\leq C 2^{\beta n/q}\cdot 2^{(1-\beta)n/2}.
\end{equation}
If we take $q$ sufficiently close to $p$, then $M^2$ is much smaller than $2^n$. On the other hand, by Corollary \ref{CovarianceCor} for any $v\in\mathbb R^d$ we have 
\[
\Cov\bigg(\sum_{m=1}^n\sum_{j=0}^{F(m)-1}Y_{m,j}\bigg)v\cdot v\geq c 2^n|v|^2.
\]
Observe next that for all $m$, $j$ and $v\in\mathbb R^d$,
\[
|\Cov(Y_{m,j})||v|^2\leq\|Y_{m,j}\|_{L^2}^2|v|^2\leq \|Y_{m,j}\|_{L^q}^2|v|^2\leq M^2|v|^2.
\]
Therefore, we can regroup $\{Y_{m,j}\},\,(m,j)\prec(n,F(n-1))$ so that (\ref{BLOCKS}) holds true with some $C'$ which does not depend on $n$ and with some $s$ (whose order in $n$ does not exceed $2^n$). Taking $z$ of the form $z=2^{\varepsilon n}$ in (\ref{z}) we obtain (\ref{(5.11)}). In the last argument,  we have used (\ref{M bound}), which insures that $M 2^{\varepsilon n}$ is much smaller than $2^{\big((1-\beta)/2+\beta/p+\varepsilon/2\big)n}$, when $q$ is close enough to $p$ and $\varepsilon$ is sufficiently small.
\end{proof} 

\begin{remark}
If a nonnegative random variable $X$ satisfies $P(X\geq Mz)\leq C'z^{-q}+\exp(-C'z)$ for all $z\geq C'\log s$, where $C',M>0$, $s\geq 2$ and $q>2$ then 
$$
\mathbb E[X^2]=\int_{0}^\infty\mathbb P(X^2\geq t)dt=2M^2\int_{0}^{\infty}z\mathbb P(X\geq Mz)\leq$$$$
2M^2\left(\int_{0}^{C'\log s}zdz+C'\int_{C'\log s}^\infty \big(z^{-q+1}+z\exp(-C'z)\big)dz\right)
\leq C''M^2(\log^2s+1)
$$
and so $\|X\|_2\leq CM\log s$ for some $C$ which depends only on $C'$ and $q$. Using this, it follows from the proof of Lemma \ref{Lemma 5.6} that for every $q\in(2,p)$ there is a constant $C_q>0$ so that for all $n\geq 2$,
\begin{equation}\label{Nest}
\left\|\max_{(k,i)\prec (n,F(n)-1)}\left|\sum_{m=1}^k\sum_{j=0}^{i-1}Y_{k,j}-S_{k,j}\right|\right\|_2\leq C_q 2^{\frac12n(2\beta/q+(1-\beta))}\log  n.
\end{equation}
\end{remark}
\textbf{Completing of the proof of Theorem \ref{Gthm}}.
The proof of Theorem 1.3 in  \cite{GO} is separated into six steps. All of these steps proceed exactly as in \cite{GO} except from Lemmas 5.6 and 5.7 there. In Lemma \ref{Lemma 5.6} we have proved a slightly weaker version of Lemma 5.6 in \cite{GO} which is clearly enough in order to obtain the desired approximation by sums of independent Gaussian random vectors. The purpose of Lemma 5.7 was to prescribe the covariances of the approximating Gaussians. 
 In  the first statement of Theorem~\ref{Gthm} we haven not claimed anything  about the variances of these Gaussians, and so we can skip in the corresponding part from~\cite{GO} and  complete the proof of (\ref{asip rate}) by taking $\beta=\frac{p}{2(p-1)}$. 
 
Next, let us  show that (\ref{Var est}) holds true. 
 Firstly, by applying Proposition \ref{L2 Version} with the finite sequence, we derive that
\[
\left\|\sum_{(n',j')\prec (n,j)}X_{n',j'}-Y_{n',j'}\right\|_{L^2}\leq C2^{\beta n/2}.
\]
Using also \eqref{Nest} with $q$ sufficiently close to $p$, and noting that $\beta/p+(1-\beta)=\beta/2$
 we obtain that there is a coupling of $\{X_{n',j'}\}$ with $\{S_{n',j'}\}$ so that
\begin{equation}\label{Up}
\left\|\sum_{(n',j')\prec (n,j)}X_{n',j'}-\sum_{(n',j')\prec (n,j)}S_{n',j'}\right\|_{L^2}\leq C2^{\beta n/2+\delta}.
\end{equation}
Of course, to obtain the above coupling we have also used the so-called Berkes–Philipp lemma (see \cite[Lemma A.1]{BP}, \cite[Lemma 3.1]{GO}).

Take $n\in\mathbb N$, and let $N_n$ be such that $2^{N_n}\leq n<2^{N_n+1}$. Furthermore, let $j_n$ be the largest index such that the left end point of $I_{N_n,j_n}$ is smaller than $n$. 
In the case when $n\in I_{N_n,j_n}$ we have
\begin{eqnarray*}
\sum_{i=1}^n A_i-\sum_{(n',j')\prec (N_n,j_n)}X_{n',j'}=\sum_{(n',j')\prec (N_n,j_n)}\,\sum_{i\in J_{n',j'}}A_i+\sum_{i\in J_{N_n,j_n}}A_i\\+\sum_{i=i_{N_n,j_n}}^{n}A_i=\sum_{i\leq n,\, i\in \mathcal J}A_i+\sum_{i=i_{N_n,j_n}}^{n}A_i:=I_1+I_2
\end{eqnarray*}
where $i_{n',j'}$ denotes the left end point of $I_{n',j'}$. Recall next that by \cite[(5.1)]{GO} the cardinality of $\mathcal J\cap[1,2^{N_n+1}]$ does not exceed $C2^{\varepsilon (N_n+1)}2^{\beta N_n}(\varepsilon N_n+2)$, which for our specific choice of $N_n$ does not exceed  $Cn^{\beta +3\varepsilon/2}$.
Using \cite[Lemma 5.9]{GO} with a sufficiently  small $\alpha$ we derive that
\[
\|I_1\|_{L^2}\leq C n^{\beta/2+\varepsilon}.
\]
On the other hand, applying \cite[Proposition 4.1]{GO} we obtain that
\[
\|I_2\|_{L^2}\leq C|I_{N_n,j_n}|^{\frac12}\leq C2^{N_n(1-\beta)/2}\leq Cn^{(1-\beta)/2}\leq Cn^{\beta/2}
\]
where we have used that for our specific choice of $\beta$ we have $(1-\beta)/2=\beta/2-\beta/p<\beta/2$.
We conclude that there exists a constant $C'>0$ so that for every $n\geq1$,
\begin{equation}\label{Del}
\left\|\sum_{j=1}^{n}A_j-\sum_{(n',j')\prec (N_n,j_n)}X_{n',j'}\right\|_{L^2}\leq C'n^{\beta/2+\varepsilon +1/p}.
\end{equation}
The proof of (\ref{Var est}) in the case when $n\in I_{N_n,j_n}$ is completed now by (\ref{Up}). 
The case when $n\not\in I_{N_n,j_n}$ is treated similarly. We first write
\[
\sum_{i=1}^n A_i-\sum_{(n',j')\prec (N_n,j_n)}X_{n',j'}=
\sum_{j\in \mathcal J, j\leq n}A_j+X_{N_n,j_n}:=I_1+I_2.
\]
Then the $L^2$-norms of the summands $I_1$ and $I_2$ are bounded exactly as in the case when $n\in I_{N_n,j_n}$, and the proof of \eqref{Var est}  in this case is also finalized by applying \eqref{Up}.

Finally, let us prove \eqref{Var est1} under the exponential decay of correlations assumptions \eqref{UniDec}. By replacing $A_n$ with $A_n\cdot v$, where $v$ is a unit vector, it is enough to prove  \eqref{Var est1} in the scalar case $d=1$. Recall first that the variances of $X_{n',j'}$ and $S_{n',j'}$ are equal. Therefore,
\[
\left\|\sum_{(n',j')\prec (n,j)}X_{n',j'}\right\|^2=
\left\|\sum_{(n',j')\prec (n,j)}S_{n',j'}\right\|^2+2\sum_{(n'_1,j'_1)\not= (n',j')}\text{Cov}(X_{n',j'},X_{n_1', j_1'})
\]
where  the double sum ranges over the pairs $(n'_1,j'_1)\not= (n',j')$ so that $(n'_1,j'_1),(n',j')\prec (n,j)$. Next, since the size of each $|I_{n,j}|$ is of order at most $2^n$ and the gaps between two different $I_{n,j}$'s is at least $2^{[n\beta]}$, using \eqref{UniDec} we conclude that there is an $a>0$ so that the above double sum is $O(2^{-an})$, and so
\begin{equation}\label{1-stA}
\left\|\sum_{(n',j')\prec (n,j)}X_{n',j'}\right\|^2=
\left\|\sum_{(n',j')\prec (n,j)}S_{n',j'}\right\|^2+O(2^{-an}).
\end{equation}  

Next, set $S_n=\sum_{(n',j')\prec (N_n,j_n)}(X_{n,j}+\check X_{n,j})$, where $\check X_{n',j'}=\sum_{k\in J_{n',j'}}A_k$.
  Set also $G_n=\sum_{(n',j')\prec (N_n,j_n)}X_{n',j'}$ and $\Delta_n=S_n-G_n$. Then 
\begin{equation}
\text{Var}(S_n)=\text{Var}(G_n)+\text{Var}(\Delta_n)+2\text{Cov}(\Delta_n,G_n).
\end{equation}
Now, by \eqref{Del} we have $\text{Var}(\Delta_n)=O(n^{\beta+\varepsilon})$. Moreover, as in the derivation of \eqref{1-stA} we have
\[
\text{Cov}(\Delta_n,G_n)=O(2^{-an})+\sum_{(n',j')\prec (N_n,j_n)}\text{Cov}(X_{n',j'},\check X_{n',j'}).
\]
 Because of \eqref{UniDec} we have that $\text{Cov}(X_{n',j'},\check X_{n',j'})$ is uniformly bounded in $(n',j')$. Therefore, 
$\text{Cov}(\Delta_n,G_n)=O(n^\beta)$. We conclude that 
\[
\text{Var}(S_n)=\text{Var}\left(\sum_{(n',j')\prec (N_n,j_n)}S_{n',j'}\right)+O(n^{\beta+\varepsilon}).
\]
Now, \eqref{UniDec} implies that 
\[
\text{Var}\left(\sum_{j=1}^n A_n\right)=\text{Var}(S_n)+O(n^{\beta+\varepsilon})
\]
and therefore \eqref{Var est1} follows by  the above estimates, noting that $\beta=\frac{p}{2(p-1)}$.
 \qed


\section{ASIP for random dynamical systems under abstract spectral conditions}\label{Random}

\subsection{Assumptions for a cocycle of transfer operators}
Let $(\Omega, \mathcal F, \mathbb P)$ be a probability space and assume that $\sigma \colon \Omega \to \Omega$ is an invertible measure-preserving transformation. Furthermore, we suppose that $\sigma$ is ergodic. 

Let
 $M$ be a compact Riemannian manifold (possibly with a boundary) equipped with the  Riemannian measure $m$ and assume that $\mathcal B=(\mathcal B, \| \cdot \|)$ is a Banach space whose elements are distributions on $M$ of order at most $q$, $q\in \N \cup \{0 \}$. More precisely, we require that there exists 
$C>0$ such that for each $h\in \mathcal B$ and $\phi \in C^q(M, \mathbb R)$, 
\begin{equation}\label{distr}
|h(\phi)| \le C\|h \| \cdot  |\phi |_{C^q}, 
\end{equation}
where $h(\phi)$ denotes the action of a distribution $h$ on a test function $\phi$. We say that $h\in \mathcal B$ is positive (and write $h\ge 0$) if $h(\phi) \ge 0$ for each test function $\phi \ge 0$. 

We assume that we have a family $(T_\omega)_{\omega \in \Omega}$ of maps $T_\omega \colon M \to M$ and the associated family 
 $(\mathcal L_\omega )_{\omega \in \Omega}$ of bounded operators on $\mathcal B$ such that:
\begin{itemize}
\item $\omega \mapsto \mathcal L_\omega$ is strongly measurable, i.e. $\omega \mapsto \mathcal L_\omega h$ is measurable for each $h\in \mathcal B$;
\item there is a norm $\| \cdot \|_w$ on $\mathcal B$ satisfying $\|\cdot \|_w \le \| \cdot \|$ and constants $a\in (0, 1)$ and $B_1, B_2>0$ such that 
\begin{equation}\label{LYineq}
\|\mathcal L_\omega^n h\| \le B_1 a^n \|h \| +B_2\|h\|_w \quad \text{for $\mathbb P$-a.e. $\omega \in \Omega$, $h\in \mathcal B$ and $n\in \N$,}
\end{equation}
where
\begin{equation}\label{compos}
\mathcal L_\omega^n=\mathcal L_{\sigma^{n-1} \omega} \circ \ldots \circ \mathcal L_{\sigma \omega} \circ \mathcal L_\omega;
\end{equation}
 \item for $\omega \in \Omega$, $h\in \mathcal B$ and a test function $\phi$, 
\begin{equation}\label{Sup1}
(\mathcal L_\omega h)(\phi)=h(\phi \circ T_\omega),
\end{equation}
where in the above formula it is implicitly assumed that the action $\phi \to h(\phi)$ induces an action $\phi \to h(\phi \circ T_\omega)$, for each $\omega$.
\end{itemize}

\begin{remark}
Observe that it follows from~\eqref{LYineq} (applied for $n=1$) that 
\begin{equation}\label{Sup}
\esssup_{\omega \in \Omega} \|\mathcal L_\omega \| <+\infty;
\end{equation}
\end{remark}

We consider $\xi \in \mathcal B^*$ given by 
\[
\langle \xi, h\rangle =h(1), \quad h\in \mathcal B.
\]
We suppose that there exist $D, \lambda >0$ such that 
\begin{equation}\label{Dec}
\|\mathcal L_\omega^n h \| \le De^{-\lambda n} \| h \|,
\end{equation}
for $\mathbb P$-a.e. $\omega \in \Omega$, $n\in \N$ and  $h \in \mathcal B_0:=\{h \in \mathcal B: \langle \xi, h \rangle =0\}$. By~\eqref{Sup1},
\begin{equation}\label{inv}
\langle \xi, \mathcal L_\omega h\rangle= \langle \xi, h\rangle, \quad \text{for $h\in \mathcal B$ and $\omega \in \Omega$.}
\end{equation}
The following result is a minor modification  of~\cite[Proposition 7]{DS}. We include the proof   for the sake of completeness.
\begin{proposition}\label{ho}
There exists a unique family $(h_\omega^0)_{\omega \in \Omega} \subset \mathcal B$ with the following properties:
\begin{itemize}
\item $\omega \mapsto h_\omega^0$ is measurable;
\item $h_\omega^0 \ge 0$ and $\langle \xi, h_\omega^0 \rangle=1$ for $\mathbb P$-a.e. $\omega \in \Omega$;
\item for $\mathbb P$-a.e. $\omega \in \Omega$, $\mathcal L_\omega h_\omega^0=h_{\sigma \omega}^0$;
\item we have that 
\begin{equation}\label{20f}
\esssup_{\omega \in \Omega} \| h_\omega^0 \|<+\infty. 
\end{equation}
\end{itemize}
\end{proposition}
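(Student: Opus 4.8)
The plan is to realize $h_\omega^0$ as a pullback limit along the past orbit of $\omega$, in the spirit of the random Perron--Frobenius construction used in \cite{DS}. Fix once and for all a reference element $h\in\mathcal B$ with $h\ge 0$ and $\langle \xi,h\rangle=1$ (such an $h$ is available under the standing assumptions, e.g.\ a suitable normalization of the Riemannian measure $m$). Let $\Omega_0\subset\Omega$ be a $\sigma$-invariant set of full measure on which \eqref{LYineq}, \eqref{Dec}, \eqref{Sup1} and \eqref{inv} hold simultaneously along the entire orbit $(\sigma^k\omega)_{k\in\mathbb Z}$; such a set exists since $\sigma$ preserves $\mathbb P$ and we are discarding only a countable union of null sets. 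For $\omega\in\Omega_0$ and $n\in\N$ set
\[
h_{\omega,n}:=\mathcal L_{\sigma^{-n}\omega}^n h .
\]

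First I would show that $(h_{\omega,n})_n$ is Cauchy in $\mathcal B$. Writing $h_{\omega,n+1}-h_{\omega,n}=\mathcal L_{\sigma^{-n}\omega}^n\big(\mathcal L_{\sigma^{-n-1}\omega}h-h\big)$ and observing that $\mathcal L_{\sigma^{-n-1}\omega}h-h\in\mathcal B_0$ by \eqref{inv}, the decay estimate \eqref{Dec} applied with base point $\sigma^{-n}\omega$ yields
\[
\|h_{\omega,n+1}-h_{\omega,n}\|\le De^{-\lambda n}\big(1+\esssup_{\omega\in\Omega}\|\mathcal L_\omega\|\big)\|h\|,
\]
which is summable in $n$; hence $h_\omega^0:=\lim_{n\to\infty}h_{\omega,n}$ exists in $\mathcal B$. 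The stated properties are then read off: measurability of $\omega\mapsto h_\omega^0$ follows from strong measurability of the cocycle (so each $\omega\mapsto h_{\omega,n}$ is measurable) together with the fact that a norm-limit of measurable $\mathcal B$-valued maps is measurable; $\langle\xi,h_\omega^0\rangle=1$ follows from continuity of $\xi$ and $\langle\xi,h_{\omega,n}\rangle=\langle\xi,h\rangle=1$; positivity of $h_\omega^0$ follows because each $h_{\omega,n}\ge0$ by \eqref{Sup1} and because the positive cone is closed in $\mathcal B$ (a consequence of \eqref{distr}); and the uniform bound \eqref{20f} follows either from \eqref{LYineq} applied to $h_{\omega,n}=\mathcal L_{\sigma^{-n}\omega}^n h$ or by telescoping the summable estimate above. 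For the equivariance, a direct index shift gives $\mathcal L_\omega h_{\omega,n}=h_{\sigma\omega,n+1}$, and letting $n\to\infty$ (using that $\mathcal L_\omega$ is a bounded operator) gives $\mathcal L_\omega h_\omega^0=h_{\sigma\omega}^0$.

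Finally I would prove uniqueness. If $(h_\omega^0)$ and $(\tilde h_\omega^0)$ both satisfy all four properties, then for $\mathbb P$-a.e.\ $\omega$ we have $h_\omega^0-\tilde h_\omega^0\in\mathcal B_0$, while equivariance iterated backwards gives $h_\omega^0-\tilde h_\omega^0=\mathcal L_{\sigma^{-n}\omega}^n\big(h_{\sigma^{-n}\omega}^0-\tilde h_{\sigma^{-n}\omega}^0\big)$ for every $n$; applying \eqref{Dec} together with the uniform bound \eqref{20f} for both families yields $\|h_\omega^0-\tilde h_\omega^0\|\le 2De^{-\lambda n}\,\esssup_{\omega\in\Omega}\|h_\omega^0\|\to0$, whence $h_\omega^0=\tilde h_\omega^0$ a.e. The only points that require care are the bookkeeping of the base-point shifts $\sigma^{-n}\omega$ in the cocycle composition \eqref{compos} and the measurability of the limit; neither is a genuine obstacle, and the essential analytic input is simply the uniform exponential decay \eqref{Dec} on $\mathcal B_0$.
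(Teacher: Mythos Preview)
Your proof is correct. The paper takes a slightly more packaged route: it introduces the Banach space $\mathcal Y$ of essentially bounded measurable sections $\omega\mapsto v(\omega)\in\mathcal B$, the closed subset $\mathcal Z\subset\mathcal Y$ of positive normalized sections, and the operator $(\mathbb L v)(\omega)=\mathcal L_{\sigma^{-1}\omega}v(\sigma^{-1}\omega)$; then \eqref{Dec} makes $\mathbb L^N$ a contraction on $\mathcal Z$ for $N$ large enough, and the unique fixed point is $\omega\mapsto h_\omega^0$. Your construction is the iterative proof of that contraction mapping theorem, carried out pointwise: starting from the constant section $v_0(\omega)\equiv h$, your $h_{\omega,n}$ is exactly $(\mathbb L^n v_0)(\omega)$. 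The paper's packaging delivers measurability, positivity, normalization, boundedness and uniqueness in one stroke (they are encoded in $\mathcal Z$ and in the uniqueness of fixed points), whereas you verify each property separately. Both arguments rest on the same analytic input, namely \eqref{Dec} on $\mathcal B_0$, and both implicitly require the existence of some positive normalized element of $\mathcal B$ (you need the reference $h$; the paper needs $\mathcal Z\neq\emptyset$, which amounts to the same thing).
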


\begin{proof}
Let $\mathcal Y$ denote the space of all measurable  $v\colon \Omega \to \mathcal B$ such that 
\[
\|v \|_\infty:=\esssup_{\omega \in \Omega} \| v(\omega) \|<+\infty.
\]
Then, $(\mathcal Y, \| \cdot \|_\infty)$ is a Banach space. By $\mathcal Z$ we denote the subset of $\mathcal Y$ consisting of all $v\in \mathcal Y$ such that 
\[
v(\omega) \ge 0 \quad \text{and} \quad \langle \xi, v(\omega)\rangle =1, \quad \text{for $\mathbb P$-a.e. $\omega \in \Omega$.}
\]
Using~\eqref{distr}, it is easy to show that $\mathcal Z$ is closed.  We define $\mathbb L \colon \mathcal Z \to \mathcal Z$ by
\[
(\mathbb L v) (\omega)=\mathcal L_{\sigma^{-1} \omega} v(\sigma^{-1} \omega), \quad \omega \in \Omega, \ v\in \mathcal Z.
\]
By~\eqref{Sup1}, \eqref{Sup},  \eqref{inv} and recalling that $\omega \mapsto \mathcal L_\omega$ is strongly measurable, we have that $\mathbb L$ is well-defined. Moreover, choosing $N$ so that $De^{-\lambda N} <1$, it follows from~\eqref{Dec} that $\mathbb L^N$ is a contraction.
Hence, $\mathbb L$ has a unique fixed point $v \in \mathcal Z$. Set $h_\omega^0=v(\omega)$ for $\omega \in \Omega$. Clearly, the family $(h_\omega^0)_{\omega \in \Omega}$ satisfies the desired properties. Conversely, it is obvious that each family satisfying the properties as in the statement of the proposition gives rise to a fixed point of $\mathbb L$, which is unique. 
\end{proof}

Let
\begin{equation}\label{Q}
Q_\omega h=\langle \xi, h\rangle h_\omega^0=h(1)h_\omega^0, \quad \text{for $h\in \mathcal B$ and $\omega \in \Omega$.}
\end{equation}
Following~\eqref{compos}, we set 
\[
Q_\omega^n=Q_{\sigma^{n-1} \omega} \circ \ldots \circ Q_{\sigma \omega} \circ Q_\omega, \quad \omega \in \Omega, \  n\in \N.
\]
\begin{corollary}\label{Kor}
There exists a constant $D'>0$ such that 
\[
\| \mathcal L_\omega^n-Q_\omega^n \| \le D'e^{-\lambda n}, \quad \text{for $\mathbb P$-a.e. $\omega \in \Omega$ and $n \in \N$.}
\]
\end{corollary}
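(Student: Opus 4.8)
The plan is to read the decay off from three facts already in hand: $\xi$ is a bounded functional fixed by the whole cocycle, $(h^0_\omega)_{\omega}$ is an equivariant family with $\langle\xi,h^0_\omega\rangle\equiv 1$, and $\esssup_\omega\|h^0_\omega\|<\infty$. Taking $\phi\equiv 1$ in \eqref{distr} gives $\xi\in\mathcal B^*$ with $\|\xi\|\le C$, so $C_1:=1+\|\xi\|\,\esssup_\omega\|h^0_\omega\|$ is finite by \eqref{20f}. Since $\langle\xi,h^0_\omega\rangle=1$ (Proposition~\ref{ho}), $Q_\omega$ is a bounded rank-one idempotent with range $\mathbb R h^0_\omega$ and kernel exactly $\mathcal B_0=\ker\xi$; and a one-line computation from $\mathcal L_\omega h^0_\omega=h^0_{\sigma\omega}$ (Proposition~\ref{ho}) together with \eqref{inv} yields the intertwining relation $\mathcal L_\omega\circ Q_\omega=Q_{\sigma\omega}\circ\mathcal L_\omega$.

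First I would iterate this relation to obtain $\mathcal L_\omega^n\circ Q_\omega=Q_{\sigma^n\omega}\circ\mathcal L_\omega^n$ for all $n$, and, unwinding the defining product $Q_\omega^n=Q_{\sigma^{n-1}\omega}\circ\cdots\circ Q_\omega$ and using $\langle\xi,h^0_\bullet\rangle\equiv 1$ at each stage, identify $Q_\omega^n$ as the rank-one map $h\mapsto\langle\xi,h\rangle h^0_{\sigma^n\omega}$, i.e.\ the projection onto the equivariant density $h^0_{\sigma^n\omega}$ with kernel $\mathcal B_0$. In particular $Q_\omega^n$ annihilates $\mathcal B_0$, and $Q_\omega^n$ agrees with $\mathcal L_\omega^n$ on the line $\mathbb R h^0_\omega$ (both send $h^0_\omega$ to $h^0_{\sigma^n\omega}$, the latter by equivariance). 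Then, for an arbitrary $h\in\mathcal B$, I would split
\[ h=Q_\omega h+g,\qquad g:=h-Q_\omega h\in\mathcal B_0,\quad\|g\|\le C_1\|h\|, \]
so that $\mathcal L_\omega^n h-Q_\omega^n h=\bigl(\mathcal L_\omega^n-Q_\omega^n\bigr)(Q_\omega h)+\bigl(\mathcal L_\omega^n-Q_\omega^n\bigr)(g)$. The first term vanishes since $Q_\omega h\in\mathbb R h^0_\omega$; in the second, $Q_\omega^n g=0$ while $\|\mathcal L_\omega^n g\|\le De^{-\lambda n}\|g\|\le DC_1 e^{-\lambda n}\|h\|$ by \eqref{Dec}. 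Taking the supremum over $\|h\|\le 1$ gives the corollary with $D'=DC_1$, and the ``$\mathbb P$-a.e.\ $\omega$'' is inherited from \eqref{Dec}, \eqref{inv} and Proposition~\ref{ho}, all of which hold for a.e.\ $\omega$.

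The only step needing care — the ``main obstacle'', modest as it is — is the bookkeeping in the second paragraph: tracking the $\sigma$-shifts through the $n$-fold compositions of the $Q_{\sigma^k\omega}$'s and the $\mathcal L_{\sigma^k\omega}$'s and invoking the normalization $\langle\xi,h^0_\bullet\rangle\equiv1$ and \eqref{inv} at each step, so as to be certain that the entire discrepancy $\mathcal L_\omega^n-Q_\omega^n$ is carried by the restriction of $\mathcal L_\omega^n$ to $\mathcal B_0$. After that, the geometric decay is handed over directly by \eqref{Dec}, and the only further input is the uniform finiteness of $\|\xi\|$ and of $\esssup_\omega\|h^0_\omega\|$ recorded above.
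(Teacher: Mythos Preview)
Your proposal is correct and follows essentially the same route as the paper: both identify $Q_\omega^n h$ with $\langle\xi,h\rangle h^0_{\sigma^n\omega}$, observe that $(\mathcal L_\omega^n-Q_\omega^n)h=\mathcal L_\omega^n(h-\langle\xi,h\rangle h_\omega^0)$ with $h-\langle\xi,h\rangle h_\omega^0\in\mathcal B_0$, and then invoke \eqref{Dec} and \eqref{20f}. The paper compresses this into a single displayed line, whereas you add some extra scaffolding (the intertwining relation $\mathcal L_\omega\circ Q_\omega=Q_{\sigma\omega}\circ\mathcal L_\omega$) that is correct but not actually used in your own argument, since you compute $Q_\omega^n$ by unwinding the product directly.
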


\begin{proof}
Observe that 
\[
(\mathcal L_\omega^n-Q_\omega^n)h=\mathcal L_\omega^n h-\langle \xi, h\rangle h_{\sigma^n \omega}=\mathcal L_\omega^n (h-\langle \xi, h\rangle h_\omega^0),
\]
and $h-\langle \xi, h\rangle h_\omega^0 \in \mathcal B_0$. Hence, the desired conclusion follows from~\eqref{Dec} and~\eqref{20f}.
\end{proof}

From now on, we assume that 
\begin{equation}\label{pm}
h_\omega^0 \quad \text{is a probability measure on $M$ for $\mathbb P$-a.e. $\omega \in \Omega$.}
\end{equation}

\begin{remark}\label{rpm}
In concrete applications, in order to fulfill the above requirement, it will be sufficient to show that $h_\omega^0$ is a Radon measure on $M$. Then, the second assertion of Proposition~\ref{ho} will imply that $h_\omega^0$ is a probability measure. 
\end{remark}

\subsection{Assumptions for a space of observables}\label{OS}
Let $\mathcal O=(\mathcal O, \| \cdot \|_o)$ be  a Banach space consisting of measurable functions $g \colon M \to \mathbb C$  and has the following  properties:
\begin{itemize}
\item constant functions belong to $\mathcal O$;
\item $\phi \in \mathcal O$ if and only if $\overline{\phi} \in \mathcal O$;
\item for $g_1, g_2 \in \mathcal O$, $g_1g_2 \in \mathcal O$. Moreover, 
\begin{equation}\label{prod2}
\|g_1 g_2 \|_o \le K_o \|g_1\|_o \cdot \|g_2\|_o, 
\end{equation}
for some constant $K_o >0$ which is independent on $g_1, g_2$.
\end{itemize}
\begin{remark}
 Without any loss of generality, we may assume (and from now on we will) that $\|1\|_o=1$. 
\end{remark}
 In addition, we suppose that there exists a bilinear operator $\cdot \colon \mathcal O \times \mathcal B \to \mathcal B$ with the property that 
there exists $K_o'>0$ (independent on $g$ and $h$) such that:
\begin{equation}\label{prod}
\| g \cdot h \| \le K_o'\|g \|_o \cdot \|h \| \quad \text{for every $g \in \mathcal O$ and $h\in \mathcal B$,}
\end{equation}
Moreover, we  assume that the action of $g\cdot h$ as a distribution is given by
\begin{equation}\label{product}
(g\cdot h)(\phi)=h(g\phi), \quad \text{for each test function $\phi$.}
\end{equation}
\begin{remark}
Clearly, we may take $K_o=K_o'$.
\end{remark}
Finally, we suppose that there is a continuous function $D \colon [0, +\infty) \times [0, +\infty) \to (0, +\infty)$ with the following properties:
\begin{itemize}
\item $D$ is increasing in the second variable and $\lim_{\theta \to 0} D(\theta, x)=0$ for each $x\ge 0$;
\item for each $\theta \in \mathbb C$ and $g\in \mathcal O$, $e^{\theta g} \in \mathcal O$ and 
\begin{equation}\label{exp}
\| e^{\theta g} -1 \|_o \le D(|\theta|, \|g \|_o).
\end{equation}
\end{itemize}

\begin{remark}
In all of our examples, the last condition will be a simple consequence of the mean-value theorem.
\end{remark}

\subsection{Good observables and  auxiliary results}
Take now $d\in \N$. We say that $g\colon \Omega \times X \to \mathbb R^d$, $g=(g^1, \ldots, g^d)$ is a \emph{good observable} if the following holds:
\begin{itemize}
\item $g$ is measurable;
\item $g_\omega^i:=g^i(\omega, \cdot) \in \mathcal O$ for $\omega \in \Omega$ and $1\le i \le d$;
\item \begin{equation}\label{Obs}
\| g\|_\infty:= \max_{1\le i \le d} \esssup_{\omega \in \Omega}  \|g_\omega^i \|_o <+\infty. 
\end{equation}
\end{itemize}

Take now a good observable $g\colon \Omega \times X \to \mathbb R^d$.
For $\theta \in \mathbb C^d$, we define a linear operator $\mathcal L_\omega^\theta \colon \mathcal B \to \mathcal B$ by 
\begin{equation}\label{twist}
\mathcal L_\omega^\theta h=\mathcal L_\omega (e^{\theta \cdot g_\omega} h), \quad h \in \mathcal B.
\end{equation}

\begin{proposition}\label{2115}
There exists a continuous function $L\colon \mathbb C \to (0, +\infty)$ such that 
\begin{equation}\label{z1}
\esssup_{\omega \in \Omega} \|\mathcal L_\omega^\theta \| \le L(\theta),
\end{equation}
for every $\theta \in \mathbb C^d$.
\end{proposition}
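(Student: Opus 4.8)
The plan is to bound $\|\mathcal L_\omega^\theta\|$ by combining the Lasota–Yorke-type inequality~\eqref{LYineq} (applied with $n=1$), the product estimate~\eqref{prod}, and the bound~\eqref{exp} on $\|e^{\theta g}-1\|_o$. Concretely, for $h\in\mathcal B$ and $\theta\in\mathbb C^d$ we write $e^{\theta\cdot g_\omega}h = h + (e^{\theta\cdot g_\omega}-1)\cdot h$, so that $\mathcal L_\omega^\theta h = \mathcal L_\omega h + \mathcal L_\omega\big((e^{\theta\cdot g_\omega}-1)\cdot h\big)$. Applying the triangle inequality, \eqref{Sup} (which gives $\esssup_\omega\|\mathcal L_\omega\|<+\infty$), and then \eqref{prod} to the second term, we get
\[
\|\mathcal L_\omega^\theta h\|\le \big(\esssup_\omega\|\mathcal L_\omega\|\big)\big(\|h\| + K_o'\,\|e^{\theta\cdot g_\omega}-1\|_o\,\|h\|\big).
\]
It remains to control $\|e^{\theta\cdot g_\omega}-1\|_o$ uniformly in $\omega$. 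Writing $\theta\cdot g_\omega = \theta_1 g_\omega^1+\cdots+\theta_d g_\omega^d$ and using the algebra structure of $\mathcal O$ together with~\eqref{prod2}, one has $e^{\theta\cdot g_\omega} = \prod_{i=1}^d e^{\theta_i g_\omega^i}$; then a telescoping argument expressing $\prod_i e^{\theta_i g_\omega^i}-1$ as a sum of terms each containing a factor $e^{\theta_i g_\omega^i}-1$, combined with~\eqref{exp} applied to each $g_\omega^i$ (whose $\mathcal O$-norm is at most $\|g\|_\infty$ by~\eqref{Obs}) and the monotonicity of $D$, yields a bound of the form $\|e^{\theta\cdot g_\omega}-1\|_o\le \Phi(|\theta|,\|g\|_\infty)$ for some continuous function $\Phi$ that does not depend on $\omega$.

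Putting these together, one sets
\[
L(\theta):=\big(\esssup_\omega\|\mathcal L_\omega\|\big)\big(1 + K_o'\,\Phi(|\theta|,\|g\|_\infty)\big),
\]
which is finite for every $\theta$ by~\eqref{Sup} and~\eqref{Obs}, continuous in $\theta$ because $D$ (hence $\Phi$) is continuous and $|\theta|\mapsto|\theta|$ is continuous, and satisfies~\eqref{z1} by construction. (Strictly speaking $L$ as written is defined on $\mathbb C^d$; one may either regard $L$ as a function of $|\theta|$ and hence of a single real or complex variable as in the statement, or simply note that the statement's $L\colon\mathbb C\to(0,+\infty)$ should be read with $\theta$ ranging so that the bound depends only on $|\theta|$.)

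The only mildly delicate point is the passage from the scalar estimate~\eqref{exp} to the estimate on $\|e^{\theta\cdot g_\omega}-1\|_o$ when $d>1$: one must be careful that the telescoping introduces only finitely many factors $e^{\theta_i g_\omega^i}$, each of $\mathcal O$-norm controlled by $1+D(|\theta_i|,\|g\|_\infty)$ via~\eqref{exp} and the triangle inequality, so that repeated use of~\eqref{prod2} produces a bound depending only on $|\theta|$ and $\|g\|_\infty$ and not on $\omega$. This is routine but is where the hypotheses on $\mathcal O$ (Banach algebra property and the function $D$) are genuinely used; everything else is a direct application of the already-established bounds~\eqref{Sup}, \eqref{prod} and~\eqref{Obs}.
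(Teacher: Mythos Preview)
Your argument is correct, but it is slightly more elaborate than what the paper does. The paper does not split off $h$ and bound $\|e^{\theta\cdot g_\omega}-1\|_o$; instead it bounds $\|e^{\theta\cdot g_\omega}\|_o$ directly: from~\eqref{exp} one has $\|e^{\theta_i g_\omega^i}\|_o\le 1+D(|\theta_i|,\|g\|_\infty)$, and then~\eqref{prod2} gives $\|e^{\theta\cdot g_\omega}\|_o\le K_o^d\prod_{i=1}^d(1+D(|\theta_i|,\|g\|_\infty))$, whence~\eqref{prod} yields~\eqref{z1} with
\[
L(\theta)=\big(\esssup_\omega\|\mathcal L_\omega\|\big)\,K_o^{d+1}\prod_{i=1}^d\big(1+D(|\theta_i|,\|g\|_\infty)\big).
\]
Your telescoping estimate for $\|e^{\theta\cdot g_\omega}-1\|_o$ is exactly the computation the paper carries out in the proof of the \emph{next} proposition (the bound on $\|\mathcal L_\omega^\theta-\mathcal L_\omega\|$), so your route effectively proves that result along the way; the paper's route here is shorter because the ``$-1$'' is not needed for the present statement.

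One small imprecision: your telescoping naturally produces a bound depending on $(|\theta_1|,\ldots,|\theta_d|)$, not on $|\theta|$ alone, since the paper only assumes $D$ is monotone in its \emph{second} argument. This is harmless (just let $\Phi$ be a continuous function of $\theta\in\mathbb C^d$, or replace $D(|\theta_i|,\cdot)$ by $\sup_{s\le|\theta|}D(s,\cdot)$), but it is worth stating accurately.
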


\begin{proof}
By~\eqref{prod2} and~\eqref{exp}, we have that 
\[
\|e^{\theta \cdot g_\omega} \|_o =\| \prod_{i=1}^d e^{\theta_i g_\omega^i} \|_o \le K_o^d \prod_{i=1}^d \|e^{\theta_i g_\omega^i} \|_0 \le K_o^d \prod_{i=1}^d  (D(|\theta_i |, \|g\|_\infty)+1).
\]
Hence, \eqref{prod} and~\eqref{twist}  imply that~\eqref{z1} holds with $L\colon \mathbb C \to (0, +\infty)$ given by
\[
L(\theta)=\big (\esssup_{\omega \in \Omega} \|\mathcal L_\omega \big  \| ) K_o^{d+1}  \prod_{i=1}^d  (D(|\theta_i |, \|g\|_\infty)+1).
\]
Clearly, $L$ is continuous.
\end{proof}

\begin{proposition}
There exists a continuous function $\tilde L \colon \mathbb C \to (0, +\infty)$ such that $\lim_{\theta \to 0} \tilde L(\theta)=0$ and
\begin{equation}\label{z2}
\esssup_{\omega \in \Omega} \|\mathcal L_\omega^\theta -\mathcal L_\omega \| \le \tilde L(\theta),
\end{equation}
for every $\theta \in \mathbb C^d$.
\end{proposition}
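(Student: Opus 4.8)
The plan is to mimic the proof of Proposition~\ref{2115}, but now tracking the size of the relevant quantities as $\theta \to 0$ rather than just their boundedness. The key observation is that
\[
\mathcal L_\omega^\theta h - \mathcal L_\omega h = \mathcal L_\omega\big((e^{\theta \cdot g_\omega}-1)h\big),
\]
so by the uniform bound~\eqref{Sup} on $\esssup_\omega \|\mathcal L_\omega\|$ together with the multiplicativity estimate~\eqref{prod}, we get
\[
\esssup_{\omega \in \Omega}\|\mathcal L_\omega^\theta - \mathcal L_\omega\| \le \big(\esssup_{\omega}\|\mathcal L_\omega\|\big) K_o' \, \esssup_{\omega \in \Omega}\|e^{\theta\cdot g_\omega}-1\|_o.
\]
So everything reduces to showing that $\esssup_\omega \|e^{\theta\cdot g_\omega}-1\|_o \to 0$ as $\theta \to 0$, with a continuous bound.

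First I would write the telescoping identity
\[
e^{\theta\cdot g_\omega}-1 = \prod_{i=1}^d e^{\theta_i g_\omega^i}-1 = \sum_{k=1}^d \Big(\prod_{i=1}^{k-1}e^{\theta_i g_\omega^i}\Big)\big(e^{\theta_k g_\omega^k}-1\big),
\]
and then estimate each term using the Banach-algebra property~\eqref{prod2} of $\mathcal O$, the bound $\|e^{\theta_i g_\omega^i}\|_o \le K_o(D(|\theta_i|,\|g\|_\infty)+1)$ established inside the proof of Proposition~\ref{2115}, and the key inequality~\eqref{exp}, namely $\|e^{\theta_k g_\omega^k}-1\|_o \le D(|\theta_k|,\|g_\omega^k\|_o) \le D(|\theta_k|,\|g\|_\infty)$, where in the last step I use that $D$ is increasing in its second variable together with~\eqref{Obs}. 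Collecting the terms gives a bound of the form
\[
\esssup_{\omega}\|e^{\theta\cdot g_\omega}-1\|_o \le \sum_{k=1}^d K_o^{k-1}\Big(\prod_{i=1}^{k-1}(D(|\theta_i|,\|g\|_\infty)+1)\Big) D(|\theta_k|,\|g\|_\infty),
\]
which is manifestly continuous in $\theta$ and tends to $0$ as $\theta \to 0$ because $\lim_{\theta\to 0}D(\theta,x)=0$ for each fixed $x \ge 0$ and $D$ is continuous. Defining $\tilde L(\theta)$ to be $\esssup_\omega\|\mathcal L_\omega\|$ times $K_o'$ times this last expression then yields~\eqref{z2} with all the required properties.

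There is essentially no obstacle here — the statement is a routine quantitative refinement of Proposition~\ref{2115}, and the only point requiring a shred of care is the bookkeeping in the telescoping sum (making sure one uses $K_o$-many times the algebra constant and that each factor $\|e^{\theta_i g_\omega^i}\|_o$ is controlled uniformly in $\omega$ via~\eqref{Obs}). The continuity of $\tilde L$ is inherited termwise from the continuity of $D$, and $\lim_{\theta\to 0}\tilde L(\theta)=0$ follows because every summand contains at least one factor $D(|\theta_k|,\|g\|_\infty)$ that vanishes in the limit.
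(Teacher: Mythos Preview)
Your proposal is correct and follows essentially the same approach as the paper: both start from $(\mathcal L_\omega^\theta-\mathcal L_\omega)h=\mathcal L_\omega((e^{\theta\cdot g_\omega}-1)\cdot h)$, telescope $\prod_{i=1}^d e^{\theta_i g_\omega^i}-1$ into a sum of terms each containing a factor $e^{\theta_k g_\omega^k}-1$, and then apply~\eqref{prod2}, \eqref{exp} and~\eqref{Obs} to obtain a continuous bound vanishing at $\theta=0$. The only differences are cosmetic (the exact powers of $K_o$ in the bookkeeping).
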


\begin{proof}
Observe that 
\[
(\mathcal L_\omega^\theta -\mathcal L_\omega)h=\mathcal L_\omega ((e^{\theta \cdot g}-1)\cdot h).
\]
It follows from~\eqref{prod2}, \eqref{exp} and the triangle inequality that 
\[
\begin{split}
\| e^{\theta \cdot g}-1 \|_o &= \| \prod_{i=1}^d e^{\theta_i g_\omega^i} -1\|_o  \\
& \le \sum_{i=1}^d \| \prod_{j=1}^i e^{\theta_j g_\omega^j}-\prod_{j=1}^{i-1} e^{\theta_j g_\omega^j} \|_o \\
&\le K_o \sum_{i=1}^d \|\prod_{j=1}^{i-1} e^{\theta_j g_\omega^j} \|_o \cdot \|e^{\theta_i g_\omega^i} -1\|_o \\
&\le  \sum_{i=1}^d K_o^{i} D(|\theta_i|, \|g\|_\infty) \prod_{j=1}^{i-1} (D(|\theta_j |, \|g\|_\infty)+1),
\end{split}
\]
and thus~\eqref{z2} holds with $\tilde L\colon \mathbb C \to (0, +\infty)$ given by
\[
\tilde L(\theta)=\big (\esssup_{\omega \in \Omega} \|\mathcal L_\omega \big  \| ) \sum_{i=1}^d K_o^{i+1} D(|\theta_i|, \|g\|_\infty) \prod_{j=1}^{i-1} (D(|\theta_j |, \|g\|_\infty)+1).
\]
Obviously, $\tilde L$ is continuous and $\lim_{\theta \to 0} \tilde L(\theta)=0$.
\end{proof}
As in~\eqref{compos}, we set
\[
\mathcal L_\omega^{\theta, n}=\mathcal L_{\sigma^{n-1} \omega}^\theta \circ \ldots \circ \mathcal L_\omega^\theta, \quad \text{for $\omega \in \Omega$, $n\in \N$ and $\theta \in \mathbb C$.}
\]
Choose $N\in \N$ such that $\gamma:=B_1a^N<1$, where $B_1$ and $a$ are as in~\eqref{LYineq}. The proof of the following result will be obtained by arguing as in the proof of~\cite[Proposition 4.4]{DFGTV}. 
\begin{proposition}\label{2116}
There exists $\tilde \gamma \in (0, 1)$ such that for all $\theta \in \mathbb C^d$ sufficiently close to $0$,
\begin{equation}\label{twistLY}
\|\mathcal L_\omega^{\theta, N} h\| \le \tilde \gamma \|h \| +B_2\|h\|_w, \quad \text{for $\mathbb P$-a.e. $\omega \in \Omega$ and  $h\in \mathcal B$.}
\end{equation}
\end{proposition}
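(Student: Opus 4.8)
The plan is to follow closely the argument of~\cite[Proposition 4.4]{DFGTV}, exploiting the uniform Lasota--Yorke inequality~\eqref{LYineq} and the fact that the twisted operators $\mathcal L_\omega^\theta$ differ from $\mathcal L_\omega$ by a small perturbation when $\theta$ is close to $0$. The starting point is the observation that, by definition~\eqref{twist}, for any $h\in\mathcal B$ one can write
\[
\mathcal L_\omega^{\theta,N}h=\mathcal L_\omega^N h+\big(\mathcal L_\omega^{\theta,N}h-\mathcal L_\omega^N h\big),
\]
and that the correction term is controlled by a telescoping sum in which each factor contributes a difference $\mathcal L_{\sigma^k\omega}^\theta-\mathcal L_{\sigma^k\omega}$ bounded by $\tilde L(\theta)$ in operator norm via~\eqref{z2}, while all remaining factors are bounded by $L(\theta)$ (for the twisted ones) and by $\esssup_\omega\|\mathcal L_\omega\|$ (for the untwisted ones) using Proposition~\ref{2115} and~\eqref{Sup}. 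Hence there is a continuous function $R\colon\mathbb C^d\to(0,+\infty)$ with $\lim_{\theta\to 0}R(\theta)=0$ such that
\[
\esssup_{\omega\in\Omega}\|\mathcal L_\omega^{\theta,N}-\mathcal L_\omega^N\|\le R(\theta).
\]

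Next I would combine this with the $n=N$ case of~\eqref{LYineq}, namely $\|\mathcal L_\omega^N h\|\le\gamma\|h\|+B_2\|h\|_w$ with $\gamma=B_1a^N<1$. Writing $\|\mathcal L_\omega^{\theta,N}h\|\le\|\mathcal L_\omega^N h\|+\|(\mathcal L_\omega^{\theta,N}-\mathcal L_\omega^N)h\|\le\gamma\|h\|+B_2\|h\|_w+R(\theta)\|h\|$, we obtain
\[
\|\mathcal L_\omega^{\theta,N}h\|\le(\gamma+R(\theta))\|h\|+B_2\|h\|_w.
\]
Since $\gamma<1$ and $R(\theta)\to 0$ as $\theta\to0$, we may choose a neighborhood of $0$ in $\mathbb C^d$ small enough that $R(\theta)<1-\gamma$ there, and set $\tilde\gamma:=\gamma+\sup R(\theta)<1$ over that neighborhood (or simply $\tilde\gamma=\gamma+R(\theta)$ for each such $\theta$); this yields~\eqref{twistLY}.

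One subtlety worth flagging is whether the coefficient $B_2$ multiplying $\|h\|_w$ can really be kept unchanged, rather than being inflated by the perturbation. This is fine because the perturbation estimate is measured in the strong norm $\|\cdot\|$ (that is what~\eqref{z2}, and hence $R(\theta)$, controls), so the entire correction term is absorbed into the $\|h\|$ coefficient and the weak-norm term from~\eqref{LYineq} passes through untouched; this is exactly the mechanism used in~\cite[Proposition 4.4]{DFGTV}. The only genuinely delicate point in the argument is the telescoping estimate for $\|\mathcal L_\omega^{\theta,N}-\mathcal L_\omega^N\|$: one must be careful to distribute the $N$ factors so that exactly one difference factor appears in each summand while the bounds on the remaining $N-1$ factors stay uniform in $\omega$ — but this is routine given Proposition~\ref{2115}, \eqref{z2} and~\eqref{Sup}, and $N$ being fixed means the resulting constants are harmless.
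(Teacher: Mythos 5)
Your proof is correct and follows essentially the same route as the paper: the identity $\mathcal L_\omega^{\theta,N}-\mathcal L_\omega^N=\sum_{j=0}^{N-1}\mathcal L_{\sigma^{N-j}\omega}^{\theta,j}(\mathcal L_{\sigma^{N-1-j}\omega}^\theta-\mathcal L_{\sigma^{N-1-j}\omega})\mathcal L_\omega^{N-1-j}$, bounded via Proposition~\ref{2115} and~\eqref{z2}, combined with the $n=N$ case of~\eqref{LYineq}. Your observation that the perturbation lands entirely on the $\|h\|$-coefficient, leaving $B_2$ intact, is exactly the mechanism the paper relies on.
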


\begin{proof}
We have
\begin{equation}\label{uu}
 \begin{split}
  \lVert \mathcal L_\omega^{\theta, N} h\rVert  &\le \lVert  \mathcal L_\omega^{ N} h\rVert+\lVert \mathcal L_\omega^{\theta, N}-\mathcal L_\omega^{N}\rVert \cdot \lVert h\rVert \\
  &\le
  \gamma \lVert h\rVert+B_2\lVert h\rVert_w+\lVert \mathcal L_\omega^{\theta, N}-\mathcal L_\omega^{N}\rVert \cdot \lVert h\rVert.
  \end{split}
 \end{equation}
 On the other hand, 
\begin{equation}\label{sum}
 \mathcal L_\omega^{\theta, N}-\mathcal L_\omega^{N}=\sum_{j=0}^{N-1} \mathcal L_{\sigma^{N-j} \omega}^{\theta, j}(\mathcal L_{\sigma^{N-1-j} \omega}^\theta -\mathcal L_{\sigma^{N-1-j} \omega})\mathcal L_\omega^{N-1-j}.
\end{equation}
Let $L\colon \mathbb C \to (0,+\infty)$ be given by Proposition~\ref{2115}. Since $L$ is continuous, we have that 
\begin{equation}\label{M}
M:=\sup_{|\theta | \le 1} L(\theta)<+\infty.
\end{equation}
Hence, 
\[
\| \mathcal L_{\sigma^{N-j} \omega}^{\theta, j} \| \le M^j \quad \text{and} \quad \|\mathcal L_\omega^{N-1-j}\| \le M^{N-1-j},
\]
for $\mathbb P$-a.e. $\omega \in \Omega$, $|\theta| \le 1$ and $0\le j \le N-1$. By~\eqref{z2} and~\eqref{sum}, we have that 
\[
\|\mathcal L_\omega^{\theta, N}-\mathcal L_\omega^{N} \| \le NM^{N-1}\tilde L(\theta), \quad \text{for $\mathbb P$-a.e. $\omega \in \Omega$ and $|\theta| \le 1$.}
\]
The above estimate together with~\eqref{uu} readily implies the desired conclusion.
\end{proof}

\begin{proposition}
Assume that there exists $B_3>0$ such that 
\begin{equation}\label{weakLY}
\| \mathcal L_\omega^{it, n} h \|_w \le B_3 \| h \|_w, \quad  \text{for $\mathbb P$-a.e. $\omega \in \Omega$, $h\in \mathcal B$, $t\in \mathbb R^d$, $|t| \le 1$ and $n\in \N$.}
\end{equation}
Then there exists $\rho, C>0$ such that 
\begin{equation}\label{it}
\|\mathcal L_\omega^{it, n} \| \le C, \quad \text{for $\mathbb P$-a.e. $\omega \in \Omega$, $n\in \N$ and $t \in \mathbb R^d$, $|t| \le \rho$.}
\end{equation}
\end{proposition}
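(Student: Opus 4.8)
The plan is to upgrade the uniform twisted Lasota--Yorke inequality of Proposition~\ref{2116} to a uniform bound on $\|\mathcal L_\omega^{it,n}\|$ by iterating it along blocks of length $N$, using~\eqref{weakLY} to keep the weak-norm terms under control. This is the standard mechanism by which a Doeblin--Fortet type inequality together with a uniform bound on the weak norm yields boundedness in the strong norm.

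First I would fix $\rho\in(0,1]$ small enough that Proposition~\ref{2116} applies to every $\theta=it$ with $t\in\mathbb R^d$, $|t|\le\rho$; let $\tilde\gamma\in(0,1)$ and $B_2$ be as in that proposition, and note that $\rho\le1$ makes~\eqref{weakLY} available. For such $t$, for $\mathbb P$-a.e.\ $\omega$, every $h\in\mathcal B$ and $j\ge0$, set $g_j:=\mathcal L_\omega^{it,jN}h$, so that $g_{j+1}=\mathcal L_{\sigma^{jN}\omega}^{it,N}g_j$. Applying~\eqref{twistLY} at the point $\sigma^{jN}\omega$ and then using~\eqref{weakLY} (together with $\|\cdot\|_w\le\|\cdot\|$) gives
\[
\|g_{j+1}\|\le\tilde\gamma\|g_j\|+B_2\|g_j\|_w\le\tilde\gamma\|g_j\|+B_2B_3\|h\|_w\le\tilde\gamma\|g_j\|+B_2B_3\|h\|.
\]
Iterating from $g_0=h$ yields, for every $k\ge0$,
\[
\|\mathcal L_\omega^{it,kN}h\|=\|g_k\|\le\tilde\gamma^k\|h\|+B_2B_3\|h\|\sum_{i=0}^{k-1}\tilde\gamma^i\le\Big(1+\frac{B_2B_3}{1-\tilde\gamma}\Big)\|h\|,
\]
so that $\|\mathcal L_\omega^{it,kN}\|\le C_0:=1+\frac{B_2B_3}{1-\tilde\gamma}$ for all $k\ge0$, a.e.\ $\omega$, and $|t|\le\rho$.

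To pass to an arbitrary $n$, write $n=kN+r$ with $0\le r<N$, so that $\mathcal L_\omega^{it,n}=\mathcal L_{\sigma^{kN}\omega}^{it,r}\circ\mathcal L_\omega^{it,kN}$. Since $|t|\le\rho\le1$, Proposition~\ref{2115} gives $\|\mathcal L_\omega^{it}\|\le L(it)\le M$, with $M=\sup_{|\theta|\le1}L(\theta)<\infty$ as in~\eqref{M}; hence $\|\mathcal L_{\sigma^{kN}\omega}^{it,r}\|\le(1+M)^{N-1}$. Combining the two estimates gives $\|\mathcal L_\omega^{it,n}\|\le(1+M)^{N-1}C_0=:C$ for $\mathbb P$-a.e.\ $\omega$, all $n\in\mathbb N$ and $|t|\le\rho$, which is~\eqref{it}. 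The argument is entirely routine; the only point requiring care is that~\eqref{twistLY} is stated only for the fixed block length $N$, so one must split $n=kN+r$ and absorb the length-$r$ remainder via the crude bound from Proposition~\ref{2115} — and it is precisely the hypothesis~\eqref{weakLY} that lets the iteration of the Lasota--Yorke inequality close up to a uniform constant instead of growing in $k$.
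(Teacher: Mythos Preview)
Your proof is correct and follows the same approach as the paper: iterate the twisted Lasota--Yorke inequality~\eqref{twistLY} along blocks of length $N$, using~\eqref{weakLY} to bound the weak-norm terms, and then handle the remainder $r<N$ with the crude single-step bound from Proposition~\ref{2115} and~\eqref{M}. The paper states this argument in three lines, whereas you have written out the iteration explicitly, but the content is identical.
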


\begin{proof}
 Let $\rho \in (0, 1)$ be such that~\eqref{twistLY} holds when $|\theta| \le \rho$. By iterating~\eqref{twistLY} and using~\eqref{weakLY}, we find that there exists $C_1>0$ such that
\begin{equation}\label{jj}
\|\mathcal L_\omega^{it, nN}\| \le C_1, \quad \text{for $\mathbb P$-a.e. $\omega \in \Omega$, $n\in \N$ and $t \in \mathbb R^d$, $|t| \le \rho$.}
\end{equation}
The desired conclusion follows readily from~\eqref{z1}, \eqref{M} and~\eqref{jj}.
\end{proof}

\subsection{Almost sure invariance principle}
By $\mathbb E_\omega(\phi)$ we will denote the expectation of $\phi$ with respect to $h_\omega^0$ (see~\eqref{pm}). 

Throughout this subsection, we take a good observable $g$ such that~\eqref{weakLY} holds (for some $B_3>0$). Set
\[
T_\omega^n=T_{\sigma^{n-1} \omega} \circ \ldots \circ T_\omega
\]
and 
\[
S_n g(\omega, \cdot)= \sum_{i=0}^{n-1} g(\sigma^i \omega, T_\omega^i(\cdot)),
\]
for $\omega \in \Omega$ and $n\in \N$.

\begin{lemma}
\label{dualtwist2} 
For  $\omega\in\Omega$, $h\in \mathcal{B}$, $n\in \N$ and a test function $\phi$, we have that 
\[
  (\mathcal{L}_\omega^{\theta, n}h)(\phi)=h(e^{\theta \cdot S_ng(\omega, \cdot)}(\phi \circ T_\omega^{n})).
\]
\end{lemma}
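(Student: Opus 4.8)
The statement is an identity relating the iterated twisted transfer operator $\mathcal{L}_\omega^{\theta,n}$ to the Birkhoff sum $S_ng$, so the natural approach is induction on $n$, using the defining relations~\eqref{twist}, \eqref{Sup1} and~\eqref{product} as the basic building blocks. First I would record the $n=1$ case: by~\eqref{twist}, $\mathcal{L}_\omega^\theta h=\mathcal{L}_\omega(e^{\theta\cdot g_\omega}h)$, where $e^{\theta\cdot g_\omega}h$ denotes the product in $\mathcal{O}\cdot\mathcal{B}\to\mathcal{B}$; applying~\eqref{Sup1} gives $(\mathcal{L}_\omega^\theta h)(\phi)=(e^{\theta\cdot g_\omega}h)(\phi\circ T_\omega)$, and then~\eqref{product} converts this to $h\bigl(e^{\theta\cdot g_\omega}\cdot(\phi\circ T_\omega)\bigr)$. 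Since $S_1g(\omega,\cdot)=g(\omega,\cdot)=g_\omega$ and $T_\omega^1=T_\omega$, this is exactly the claimed formula for $n=1$.

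For the inductive step, suppose the identity holds for $n$ and all $\omega$; I want it for $n+1$. I would write $\mathcal{L}_\omega^{\theta,n+1}=\mathcal{L}_{\sigma^n\omega}^\theta\circ\mathcal{L}_\omega^{\theta,n}$ (consistent with the composition convention~\eqref{compos}), so that
\[
(\mathcal{L}_\omega^{\theta,n+1}h)(\phi)=\bigl(\mathcal{L}_{\sigma^n\omega}^\theta(\mathcal{L}_\omega^{\theta,n}h)\bigr)(\phi)
=(\mathcal{L}_\omega^{\theta,n}h)\bigl(e^{\theta\cdot g_{\sigma^n\omega}}\cdot(\phi\circ T_{\sigma^n\omega})\bigr),
\]
where the last equality is the $n=1$ case applied to the distribution $\mathcal{L}_\omega^{\theta,n}h$ with test function $\phi$. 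Now I apply the inductive hypothesis with the test function $\psi:=e^{\theta\cdot g_{\sigma^n\omega}}\cdot(\phi\circ T_{\sigma^n\omega})$, which yields $h\bigl(e^{\theta\cdot S_ng(\omega,\cdot)}\cdot(\psi\circ T_\omega^n)\bigr)$. It then remains to simplify $\psi\circ T_\omega^n$: since $T_\omega^{n+1}=T_{\sigma^n\omega}\circ T_\omega^n$, we get $\phi\circ T_{\sigma^n\omega}\circ T_\omega^n=\phi\circ T_\omega^{n+1}$, while $(e^{\theta\cdot g_{\sigma^n\omega}})\circ T_\omega^n=e^{\theta\cdot(g_{\sigma^n\omega}\circ T_\omega^n)}=e^{\theta\cdot g(\sigma^n\omega,T_\omega^n(\cdot))}$; multiplying by $e^{\theta\cdot S_ng(\omega,\cdot)}$ and using $S_ng(\omega,\cdot)+g(\sigma^n\omega,T_\omega^n(\cdot))=S_{n+1}g(\omega,\cdot)$ gives the desired formula.

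The one genuine subtlety — and the step I would treat most carefully — is that~\eqref{product} only tells us $(g\cdot h)(\phi)=h(g\phi)$ for a test function $\phi$, so I must make sure that every time I reassociate a product, what I am multiplying is legitimately a test function (or an element of $\mathcal{O}$ acting on the distribution). Concretely, in the inductive step the quantity $\psi=e^{\theta\cdot g_{\sigma^n\omega}}\cdot(\phi\circ T_{\sigma^n\omega})$ needs to be an admissible test function for the distribution $\mathcal{L}_\omega^{\theta,n}h$, and $\psi\circ T_\omega^n$ needs to be admissible for $h$; this is where the implicit hypotheses of the setup (that the action $\phi\mapsto h(\phi)$ induces $\phi\mapsto h(\phi\circ T_\omega)$, and that multiplication by $\mathcal{O}$-elements preserves the relevant function class) get used. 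I would state this once at the start of the proof as the standing assumption under which all manipulations are valid, and then the computation above goes through verbatim. Everything else is bookkeeping with the composition conventions~\eqref{compos} and the cocycle identity for $T_\omega^n$, so no real obstacle remains beyond that.
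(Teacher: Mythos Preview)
Your proposal is correct and follows exactly the approach indicated in the paper: the paper's proof consists of the single sentence that the conclusion follows from~\eqref{Sup1} and~\eqref{product} by induction on $n$, and your write-up is precisely a careful unpacking of that induction. Your attention to the admissibility of the intermediate test functions is appropriate and matches the implicit assumptions built into~\eqref{Sup1} and~\eqref{product}.
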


\begin{proof}
The desired conclusion follows from~\eqref{Sup1} and~\eqref{product} (by using induction on $n$).
\end{proof}

\begin{lemma}\label{631l}
Let $g\colon \Omega \times M \to \mathbb R^d$ be a good observable. 
For $\mathbb P$-a.e. $\omega \in \Omega$, 
there exist $C,c,\rho>0$ such that for any $n,m>0$, $b_1<b_2< \ldots <b_{n+m+1}$, $k>0$ and $t_1,\ldots ,t_{n+m}\in\mathbb R^d$ with $|t_j|\leq \rho$, we have that
\begin{eqnarray*}
\Big|\mathbb E_\omega\big(e^{i\sum_{j=1}^nt_j \cdot (\sum_{\ell=b_j}^{b_{j+1}-1}A_\ell)+i\sum_{j=n+1}^{n+m}t_j \cdot (\sum_{\ell=b_j+k}^{b_{j+1}+k-1}A_\ell)}\big)\\
-\mathbb E_\omega\big(e^{i\sum_{j=1}^nt_j \cdot (\sum_{\ell=b_j}^{b_{j+1}-1}A_\ell)}\big)\cdot\mathbb E_\omega\big(e^{i\sum_{j=n+1}^{n+m}t_j \cdot (\sum_{\ell=b_j+k}^{b_{j+1}+k-1}A_\ell)}\big)\Big|\\\leq C(1+\max|b_{j+1}-b_j|)^{C(n+m)}e^{-ck},
\end{eqnarray*}
\end{lemma}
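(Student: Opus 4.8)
The plan is to rewrite the left-hand side of the claimed inequality in terms of twisted transfer operators via Lemma~\ref{dualtwist2}, and then to extract the exponential decay in $k$ from the loss of memory in Corollary~\ref{Kor}, controlling the twisted operators uniformly in the block lengths via~\eqref{it}. Concretely, write $\ell_j:=b_{j+1}-b_j$ for the block lengths. Applying Lemma~\ref{dualtwist2} repeatedly (with $\theta=it_j$ on the $j$-th block, and $\theta=0$ on the initial stretch of length $b_1$ as well as on the stretch of length $k$ separating the two groups of blocks), and using that $\langle\xi,h\rangle=h(1)$, that $h_\omega^0$ is a probability measure, and that $(T_\omega^j)_*h_\omega^0=h_{\sigma^j\omega}^0$ (a consequence of~\eqref{Sup1} and Proposition~\ref{ho}), one obtains
\[
\mathbb E_\omega\big(e^{i\sum_{j=1}^n t_j\cdot(\sum_{\ell=b_j}^{b_{j+1}-1}A_\ell)+i\sum_{j=n+1}^{n+m}t_j\cdot(\sum_{\ell=b_j+k}^{b_{j+1}+k-1}A_\ell)}\big)=\big\langle\xi,\,\Psi_\omega^{(2)}\,\mathcal L_{\sigma^{b_{n+1}}\omega}^{k}\,\Psi_\omega^{(1)}\,h_\omega^0\big\rangle,
\]
where $\Psi_\omega^{(1)}=\mathcal L_{\sigma^{b_n}\omega}^{it_n,\ell_n}\circ\cdots\circ\mathcal L_{\sigma^{b_1}\omega}^{it_1,\ell_1}\circ\mathcal L_\omega^{b_1}$ and $\Psi_\omega^{(2)}=\mathcal L_{\sigma^{b_{n+m}+k}\omega}^{it_{n+m},\ell_{n+m}}\circ\cdots\circ\mathcal L_{\sigma^{b_{n+1}+k}\omega}^{it_{n+1},\ell_{n+1}}$ collect the first and second groups of blocks respectively. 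The same identities, together with the invariance~\eqref{inv} of $\xi$ under the untwisted operators and the equivariance of $(h_\omega^0)$, identify the two factors in the product on the left-hand side of the lemma with $\langle\xi,\Psi_\omega^{(1)}h_\omega^0\rangle$ and $\langle\xi,\Psi_\omega^{(2)}h_{\sigma^{b_{n+1}+k}\omega}^0\rangle$ (for the second factor, after re-indexing the shifted blocks and using $(T_\omega^{b_{n+1}+k})_*h_\omega^0=h_{\sigma^{b_{n+1}+k}\omega}^0$).

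Next I would extract the decay. By Corollary~\ref{Kor} we may write $\mathcal L_{\sigma^{b_{n+1}}\omega}^{k}=Q_{\sigma^{b_{n+1}}\omega}^{k}+E_k$ with $\|E_k\|\le D'e^{-\lambda k}$ and $Q_\eta^k h=\langle\xi,h\rangle h_{\sigma^k\eta}^0$. Applied to $h:=\Psi_\omega^{(1)}h_\omega^0$, whose $\xi$-image is exactly the first factor above, the $Q$-term reproduces the product $\langle\xi,\Psi_\omega^{(1)}h_\omega^0\rangle\cdot\langle\xi,\Psi_\omega^{(2)}h_{\sigma^{b_{n+1}+k}\omega}^0\rangle$, so the left-hand side of the lemma equals $\big|\langle\xi,\Psi_\omega^{(2)}E_k(\Psi_\omega^{(1)}h_\omega^0)\rangle\big|\le\|\xi\|_{\mathcal B^*}\,\|\Psi_\omega^{(2)}\|\,D'e^{-\lambda k}\,\|\Psi_\omega^{(1)}h_\omega^0\|$. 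Here $\|\xi\|_{\mathcal B^*}<\infty$ by~\eqref{distr} and $\|h_\omega^0\|\le\|h^0\|_\infty<\infty$ by~\eqref{20f}; the key point is to bound the twisted operators not by $L(it_j)^{\ell_j}$ (which would blow up with $\ell_j$) but, choosing $\rho$ as in~\eqref{it}, by $\|\mathcal L_\eta^{it_j,\ell_j}\|\le C$ uniformly in $\ell_j$, $j$ and $\eta$ whenever $|t_j|\le\rho$. Since $\Psi_\omega^{(2)}$ is a composition of $m$ such operators and $\Psi_\omega^{(1)}$ of $n$ such operators followed by $\mathcal L_\omega^{b_1}$ (which sends $h_\omega^0$ to $h_{\sigma^{b_1}\omega}^0$ with no growth), we get $\|\Psi_\omega^{(2)}\|\le C^m$ and $\|\Psi_\omega^{(1)}h_\omega^0\|\le C^n\|h^0\|_\infty$. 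Hence the left-hand side is at most $D'\|\xi\|_{\mathcal B^*}\|h^0\|_\infty\,C^{\,n+m}e^{-\lambda k}$, which (as $1+\max_j|b_{j+1}-b_j|\ge2$) is dominated by $C(1+\max_j|b_{j+1}-b_j|)^{C(n+m)}e^{-ck}$ with $c=\lambda$ — in fact one obtains the stronger estimate with no block-length factor.

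The only genuinely delicate part is the bookkeeping in the operator reformulation: tracking how the untwisted stretches shift the base point, how the equivariance of $(h_\omega^0)$ transports these shifts, and how the re-indexing identifies $\langle\xi,\Psi_\omega^{(1)}h_\omega^0\rangle$ and $\langle\xi,\Psi_\omega^{(2)}h_{\sigma^{b_{n+1}+k}\omega}^0\rangle$ with the two one-group characteristic functions $\mathbb E_\omega(e^{i\sum_{j=1}^n t_j\cdot(\cdots)})$ and $\mathbb E_\omega(e^{i\sum_{j=n+1}^{n+m} t_j\cdot(\cdots)})$. Once the left-hand side is in that operator form, the estimate is an immediate consequence of Corollary~\ref{Kor} and the uniform bound~\eqref{it}, and the constants $C,c,\rho$ can in fact be chosen independently of $\omega$.
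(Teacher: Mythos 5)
Your proof is correct and follows essentially the same route as the paper: rewrite the characteristic function via Lemma~\ref{dualtwist2} as a composition of twisted transfer operators sandwiched around $\mathcal L_{\sigma^{b_{n+1}}\omega}^k$, split $\mathcal L^k = Q^k + (\mathcal L^k - Q^k)$ using Corollary~\ref{Kor}, bound each twisted block uniformly by~\eqref{it}, and observe that the $Q^k$-term factors into the product of the two one-group characteristic functions while the remainder contributes the $e^{-\lambda k}$ decay. Your observation that the block-length factor $(1+\max|b_{j+1}-b_j|)^{C(n+m)}$ is not actually needed (one obtains the cleaner bound $C^{n+m}e^{-\lambda k}$) is also implicit in the paper's estimate.
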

where \[A_\ell:=g_{\sigma^\ell \omega} \circ T_\omega^\ell, \quad \ell\in \N\cup\{0\}. \]
\begin{proof}
For $\omega \in \Omega$, let $Q_\omega$ be given by~\eqref{Q}.
By applying Lemma~\ref{dualtwist2}, one can verify that
\[
\begin{split}
& \mathbb E_\omega\big(e^{i\sum_{j=1}^nt_j \cdot (\sum_{\ell=b_j}^{b_{j+1}-1}A_\ell)+i\sum_{j=n+1}^{n+m}t_j \cdot (\sum_{\ell=b_j+k}^{b_{j+1}+k-1}A_\ell)}\big) \\
&=          \mathcal L_{\sigma^{b_{n+m}+k}\omega}^{it_{n+m}, b_{n+m+1}-b_{n+m}}                       \ldots   \mathcal L_{\sigma^{b_{n+1}+k}\omega}^{it_{n+1}, b_{n+2}-b_{n+1}}   \mathcal L_{\sigma^{b_{n+1}} \omega}^{k}  \mathcal L_{\sigma^{b_n}\omega}^{it_n, b_{n+1}-b_n}   \ldots  \mathcal L_{\sigma^{b_1} \omega}^{it_1, b_2-b_1}\mathcal L_\omega^{b_1}  
h_\omega^0 (1) \\
&=\mathcal L_{\sigma^{b_{n+m}+k}\omega}^{it_{n+m}, b_{n+m+1}-b_{n+m}}                       \ldots   \mathcal L_{\sigma^{b_{n+1}+k}\omega}^{it_{n+1}, b_{n+2}-b_{n+1}} (\mathcal L_{\sigma^{b_{n+1}} \omega}^{k}-Q_{\sigma^{b_{n+1}} \omega}^{k}) \mathcal L_{\sigma^{b_n}\omega}^{it_n, b_{n+1}-b_n}   \ldots  \mathcal L_\omega^{b_1}  
 h_\omega^0 (1) \\
&\phantom{=}+\mathcal L_{\sigma^{b_{n+m}+k}\omega}^{it_{n+m}, b_{n+m+1}-b_{n+m}}                       \ldots   \mathcal L_{\sigma^{b_{n+1}+k}\omega}^{it_{n+1}, b_{n+2}-b_{n+1}} Q_{\sigma^{b_{n+1}} \omega}^{k} \mathcal L_{\sigma^{b_n}\omega}^{it_n, b_{n+1}-b_n}   \ldots  \mathcal L_\omega^{b_1} 
 h_\omega^0 (1) .\\
\end{split}
\]
It follows from~\eqref{distr}, \eqref{20f}, Corollary~\ref{Kor} and~\eqref{it} that 
\[
\begin{split}
&\bigg{\lvert}\mathcal L_{\sigma^{b_{n+m}+k}\omega}^{it_{n+m}, b_{n+m+1}-b_{n+m}}                       \ldots   \mathcal L_{\sigma^{b_{n+1}+k}\omega}^{it_{n+1}, b_{n+2}-b_{n+1}} (\mathcal L_{\sigma^{b_{n+1}} \omega}^{k}-Q_{\sigma^{b_{n+1}} \omega}^{k}) \mathcal L_{\sigma^{b_n}\omega}^{it_n, b_{n+1}-b_n}   \ldots  \mathcal L_\omega^{b_1}  
 h_\omega^0 (1)  \bigg{\rvert} \\
&\le C'e^{-\lambda k}C^{n+m},
\end{split}
\]
for some constant $C'>0$.
Moreover, Lemma~\ref{dualtwist2} implies that
\[
\begin{split}
&\mathcal L_{\sigma^{b_{n+m}+k}\omega}^{it_{n+m}, b_{n+m+1}-b_{n+m}}                       \ldots   \mathcal L_{\sigma^{b_{n+1}+k}\omega}^{it_{n+1}, b_{n+2}-b_{n+1}} Q_{\sigma^{b_{n+1}} \omega}^{k} \mathcal L_{\sigma^{b_n}\omega}^{it_n, b_{n+1}-b_n}   \ldots  \mathcal L_\omega^{b_1} 
 h_\omega^0 (1) \\
&=\mathbb E_\omega\big(e^{i\sum_{j=1}^nt_j \cdot (\sum_{\ell=b_j}^{b_{j+1}-1}A_\ell)}\big)\cdot\mathbb E_\omega\big(e^{i\sum_{j=n+1}^{n+m}t_j \cdot (\sum_{\ell=b_j+k}^{b_{j+1}+k-1}A_\ell)}\big),
\end{split}
\]
and the conclusion of the lemma follows. 
\end{proof}

In order to obtain our main result, we introduce an additional assumption. For $h\in \mathcal B$, we assume that the action $\phi \to h(\phi)$ induces an action
$\phi \to h(\phi)$ on $\mathcal O$ and that there exists $L>0$ such that 
\begin{equation}\label{nc}
|h(\phi)| \le L \|h \| \cdot \|\phi \|_o, \quad \text{for $\phi \in \mathcal O$ and $h\in \mathcal B$.}
\end{equation}
\begin{remark}
In principle, instead of assuming \eqref{nc} we could have assumed from the start that $\mathcal B$ is a space of distributions acting on $\mathcal O$. In this case \eqref{distr} and \eqref{nc} coincide. For this to make sense, we would need to require that $\mathcal B$ includes all finite measures, which holds when $\|g\|_{o}\geq \sup |g|$, for all $g\in\mathcal O$. For instance, in the setting of Subsection~\ref{LY},  $\mathcal B$ will be the space of  functions $h$ on $M$ with bounded variation, identified as linear functionals given by $h(\phi)=\int_M h\phi \, dm$. Note that the  $BV$ norm is  larger than the supremum norm. In this case it will make no difference if we consider $h\in BV$ as a distribution on $C^0(M,\mathbb R)$ or on the space of $BV$ functions. 

The reason we have decided not to present our results in this way and to assume \eqref{nc} is that it is less standard to consider an abstract class of test functions.
\end{remark}

\begin{lemma}\label{corr}
There exists $C>0$ such that 
\[
|h_\omega^0(\phi (\psi \circ T_\omega^n))| \le Ce^{-\lambda n}\| \phi \|_o \cdot \|\psi \|_o, 
\]
for $\mathbb P$-a.e. $\omega \in \Omega$, $n\in \N$ and $\phi, \psi \in \mathcal O$ such that $h_\omega^0(\phi)=0$.
\end{lemma}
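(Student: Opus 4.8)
The plan is to realize the left-hand side as the action of a suitable element of $\mathcal B_0$ transported by the transfer cocycle, and then to invoke the exponential decay~\eqref{Dec} together with the continuity bound~\eqref{nc}. Fix $\omega$ in the full-measure set on which all of the preceding assumptions hold, and take $\phi\in\mathcal O$ with $h_\omega^0(\phi)=0$. First I would introduce $H_\omega:=\phi\cdot h_\omega^0\in\mathcal B$, using the bilinear product $\mathcal O\times\mathcal B\to\mathcal B$ from Subsection~\ref{OS}. By~\eqref{prod} one has $\|H_\omega\|\le K_o'\|\phi\|_o\,\|h_\omega^0\|\le K_o'\|\phi\|_o\,\esssup_{\omega\in\Omega}\|h_\omega^0\|$, and the last quantity is finite by~\eqref{20f}. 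Moreover, applying~\eqref{product} with the constant test function $1$ gives $\langle\xi,H_\omega\rangle=H_\omega(1)=h_\omega^0(\phi)=0$, so that $H_\omega\in\mathcal B_0$.

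Next I would identify the correlation with the action of the transported distribution, namely $h_\omega^0\big(\phi\,(\psi\circ T_\omega^n)\big)=(\mathcal L_\omega^n H_\omega)(\psi)$. Indeed, iterating~\eqref{Sup1}, i.e.\ applying Lemma~\ref{dualtwist2} with $\theta=0$, yields $(\mathcal L_\omega^n H_\omega)(\psi)=H_\omega(\psi\circ T_\omega^n)$, and then~\eqref{product} gives $H_\omega(\psi\circ T_\omega^n)=h_\omega^0\big(\phi\,(\psi\circ T_\omega^n)\big)$. Here~\eqref{Sup1} and~\eqref{product} are formulated for test functions, so the genuinely delicate point is the passage to arbitrary $\psi\in\mathcal O$: this is handled by~\eqref{nc}, which guarantees that every $h\in\mathcal B$ acts continuously on $\mathcal O$, combined with approximation of $\psi$ by test functions (equivalently, by the standard transfer-operator duality for the measure $\phi\,dh_\omega^0$ tested against bounded measurable functions). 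I expect this bookkeeping — rather than any substantive estimate — to be the only real obstacle.

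Finally I would combine the pieces. Since $H_\omega\in\mathcal B_0$, estimate~\eqref{Dec} gives $\|\mathcal L_\omega^n H_\omega\|\le De^{-\lambda n}\|H_\omega\|$, and then~\eqref{nc} yields
\[
\big|h_\omega^0\big(\phi\,(\psi\circ T_\omega^n)\big)\big|=\big|(\mathcal L_\omega^n H_\omega)(\psi)\big|\le L\,\|\mathcal L_\omega^n H_\omega\|\,\|\psi\|_o\le LD\,e^{-\lambda n}\,\|H_\omega\|\,\|\psi\|_o.
\]
Plugging in the bound on $\|H_\omega\|$ obtained in the first step gives the assertion with $C:=LDK_o'\,\esssup_{\omega\in\Omega}\|h_\omega^0\|$, uniformly in $\omega$ (on the relevant full-measure set), in $n\in\N$ and in $\phi,\psi\in\mathcal O$ with $h_\omega^0(\phi)=0$. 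The argument is thus just a direct chaining of the standing hypotheses once the correlation has been rewritten via the transfer cocycle.
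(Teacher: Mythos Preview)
Your proposal is correct and follows exactly the paper's approach: rewrite $h_\omega^0(\phi(\psi\circ T_\omega^n))=\mathcal L_\omega^n(\phi\cdot h_\omega^0)(\psi)$, observe that $\phi\cdot h_\omega^0\in\mathcal B_0$, and combine~\eqref{Dec}, \eqref{20f}, \eqref{prod} and~\eqref{nc}. The paper's proof is the terse two-line version of what you wrote; your concern about extending~\eqref{Sup1} from test functions to $\psi\in\mathcal O$ is legitimate bookkeeping that the paper absorbs into the standing assumption~\eqref{nc}.
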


\begin{proof}
We have that 
\[
h_\omega^0(\phi (\psi \circ T_\omega^n))=\mathcal L_\omega^n (\phi \cdot h_\omega^0)(\psi).
\]
Since $\phi \cdot h_\omega^0 \in \mathcal B_0$, the desired conclusion follows from~\eqref{Dec}, \eqref{20f}, \eqref{prod} and~\eqref{nc}.
\end{proof}

We consider the skew-product transformation $\tau \colon \Omega \times M \to \Omega \times M$ given by
\begin{equation}\label{spt}
\tau (\omega, x)=(\sigma \omega, T_\omega (x)), \quad (\omega, x) \in  \Omega \times M.
\end{equation}
Furthermore, we consider the probability measure $\mu$ on $\Omega \times M$ given by
\[
\mu(A\times B)=\int_\Omega h_\omega^0(B)\, d\mathbb P(\omega), \quad \text{for $A\in \mathcal F$ and $B\subset M$ Borel.}
\]
It follows easily from the third assertion of Proposition~\ref{ho} that $\mu$ is invariant for $\tau$.
Let us also assume that $\mu$ is ergodic. 
Next, we need the following result.
\begin{proposition}\label{VarProp}
Let $g\colon \Omega \times M \to \mathbb R$ be a good observable such that $g\in L^2(\Omega \times M, \mu)$ and $h_\omega^0(g_\omega)=0$ for $\mathbb P$-a.e. $\omega \in \Omega$. Then, there exists $\Sigma^2\ge 0$ such that 
\[
\lim_{n\to \infty} \frac  1 n \mathbb E_\omega \bigg (\sum_{k=0}^{n-1} g_{\sigma^k \omega}\circ T_\omega^k \bigg )^2=\Sigma^2, \quad \text{for $\mathbb P$-a.e. $\omega \in \Omega$.}
\]
Moreover,
\begin{equation}\label{SF}
\Sigma^2=\int_{\Omega \times M} g^2\, d\mu+2\sum_{n=1}^\infty \int_{\Omega \times M}  g(g\circ \tau^n)\, d\mu.
\end{equation}
Finally, $\Sigma^2=0$ if and only if there exists $q\in L^2(\Omega \times M, \mu)$ such that 
\begin{equation}\label{Cob0}
g=q-q\circ \tau, \quad \text{$\mu$-a.s.}
\end{equation}
\end{proposition}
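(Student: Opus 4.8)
The plan is to reduce the statement to the now-classical variance formula and coboundary dichotomy for ergodic stationary sequences (in the sense of Gordin / the martingale–coboundary theory), using the exponential decay of correlations from Lemma~\ref{corr} to guarantee summability of the correlation series. First I would set $A_k = g_{\sigma^k\omega}\circ T_\omega^k$ and expand
\[
\mathbb E_\omega\Big(\sum_{k=0}^{n-1}A_k\Big)^2 = \sum_{k=0}^{n-1}\mathbb E_\omega(A_k^2) + 2\sum_{0\le k<\ell\le n-1}\mathbb E_\omega(A_k A_\ell).
\]
The cross term $\mathbb E_\omega(A_k A_\ell)$ equals $h_{\sigma^k\omega}^0\big(g_{\sigma^k\omega}\,(g_{\sigma^\ell\omega}\circ T_{\sigma^k\omega}^{\,\ell-k})\circ\cdots\big)$ after pushing forward by $T_\omega^k$ and using $\tau$-invariance of $\mu$; since $h_{\sigma^k\omega}^0(g_{\sigma^k\omega})=0$, Lemma~\ref{corr} gives $|\mathbb E_\omega(A_k A_\ell)|\le C\,\|g\|_\infty^2\,e^{-\lambda(\ell-k)}$, uniformly in $\omega$ and $k$. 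This is the key quantitative input: it makes the double sum absolutely convergent after dividing by $n$.

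Next I would invoke Birkhoff's ergodic theorem for the ergodic system $(\Omega\times M,\mu,\tau)$ applied to the functions $x\mapsto g^2$ and, for each fixed $n\ge1$, $x\mapsto g\,(g\circ\tau^n)$ (all in $L^1(\mu)$ since $g\in L^2(\mu)$). This yields, for $\mathbb P$-a.e.\ $\omega$,
\[
\frac1n\sum_{k=0}^{n-1}\mathbb E_\omega(A_k^2)\to\int_{\Omega\times M}g^2\,d\mu,\qquad
\frac1n\sum_{k=0}^{n-1}\mathbb E_\omega(A_k\,A_{k+j})\to\int_{\Omega\times M}g\,(g\circ\tau^j)\,d\mu
\]
for each fixed $j$. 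Writing the double sum as $\sum_{j=1}^{n-1}\sum_{k=0}^{n-1-j}\mathbb E_\omega(A_k A_{k+j})$ and using the uniform exponential bound to justify exchanging the limit with the sum over $j$ (dominated convergence on $\N$, the dominating sequence being $C\|g\|_\infty^2 e^{-\lambda j}$), I obtain that $\frac1n\mathbb E_\omega(\sum_{k=0}^{n-1}A_k)^2$ converges to the right-hand side of~\eqref{SF}, which is therefore $\ge0$; call it $\Sigma^2$. A small technical point I would be careful about: Birkhoff convergence holds on a full-measure set depending on $j$, so I intersect countably many such sets to get a single full-measure set of $\omega$'s that works simultaneously for all $j$.

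For the final dichotomy, the direction "$g=q-q\circ\tau$ implies $\Sigma^2=0$" is immediate: the Birkhoff sum telescopes to $q-q\circ\tau^n$, whose $L^2(\mu)$-norm is $O(1)$, so $\frac1n\mathbb E_\mu(S_n g)^2\to0$; and since $\frac1n\mathbb E_\omega(S_ng)^2\to\Sigma^2$ for a.e.\ $\omega$ while $\int_\Omega\frac1n\mathbb E_\omega(S_ng)^2\,d\mathbb P=\frac1n\mathbb E_\mu(S_ng)^2$, a dominated-convergence argument forces $\Sigma^2=0$. The converse is the main obstacle and is handled by the standard Gordin-type argument: exponential decay of correlations from Lemma~\ref{corr} implies that the series $q:=\sum_{n\ge0}g\circ\tau^n$ converges in $L^2(\Omega\times M,\mu)$ — here one uses $|\int g\,(g\circ\tau^n)\,d\mu|\le C\|g\|_\infty^2 e^{-\lambda n}$ together with the fact that partial sums form a Cauchy sequence in $L^2$ via the mixing bound applied to $\|\sum_{n=a}^{b}g\circ\tau^n\|_{L^2}^2$. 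One then checks the identity
\[
\Sigma^2 = \|g\|_{L^2(\mu)}^2 + 2\sum_{n\ge1}\langle g, g\circ\tau^n\rangle_{L^2(\mu)} = \lim_{N\to\infty}\frac1N\Big\|\sum_{n=0}^{N-1}g\circ\tau^n\Big\|_{L^2(\mu)}^2,
\]
and when this vanishes, setting $\tilde q_N=\sum_{n=0}^{N-1}(1-n/N)\,g\circ\tau^n$ (or directly manipulating $q$) yields $g=q-q\circ\tau$ in $L^2(\mu)$; concretely, from $q=g+q\circ\tau$ one reads off $g=q-q\circ\tau$ once $q\in L^2$ is established. The delicate part is genuinely the $L^2$-summability of $\sum g\circ\tau^n$, for which the uniform-in-$\omega$ exponential correlation decay of Lemma~\ref{corr}, integrated against $\mathbb P$, is exactly what is needed.
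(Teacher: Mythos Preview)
Your treatment of the existence of $\Sigma^2$ and formula~\eqref{SF} matches the paper's approach, with one imprecision worth flagging: the averages $\frac{1}{n}\sum_{k}\mathbb{E}_\omega(A_k^2)$ and $\frac{1}{n}\sum_{k}\mathbb{E}_\omega(A_kA_{k+j})$ are Birkhoff averages on the \emph{base} $(\Omega,\mathbb{P},\sigma)$, for the functions $\omega\mapsto h_\omega^0(g_\omega^2)$ and $\omega\mapsto h_\omega^0\big(g_\omega(g_{\sigma^j\omega}\circ T_\omega^j)\big)$, not on the skew product. Birkhoff on $(\Omega\times M,\mu,\tau)$ only yields pointwise convergence in $(\omega,x)$, which does not directly give convergence of the fiber integrals $\mathbb{E}_\omega(\cdot)$. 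The paper applies Birkhoff on the base; with that adjustment your argument and the paper's coincide.

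The converse direction of the coboundary dichotomy, however, contains a genuine error. The series $q:=\sum_{n\ge 0}g\circ\tau^n$ does \emph{not} converge in $L^2(\mu)$: since $\tau$ preserves $\mu$, every term $g\circ\tau^n$ has $L^2$-norm equal to $\|g\|_{L^2(\mu)}$, and expanding $\big\|\sum_{n=a}^{b}g\circ\tau^n\big\|_{L^2}^2$ produces the diagonal contribution $(b-a+1)\|g\|_{L^2}^2$, so the partial sums are never Cauchy unless $g=0$. Exponential decay of correlations controls the inner products $\langle g,g\circ\tau^n\rangle$, not the size of $g\circ\tau^n$; you are conflating summability of the correlation series with convergence of the function series. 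The paper instead observes that $\{g\circ\tau^n\}_{n\ge 0}$ is a stationary sequence on $(\Omega\times M,\mu)$ with $\sum_n (n+1)\,|\langle g,g\circ\tau^n\rangle|<\infty$ (by Lemma~\ref{corr}) and invokes the classical result from Ibragimov--Linnik that for such sequences $\Sigma^2=0$ is equivalent to an $L^2$-coboundary representation. If you want a direct argument, the correct route is: summability of $n|c_n|$ together with $\Sigma^2=0$ gives $\sup_n\|S_n\|_{L^2(\mu)}<\infty$, and then a weak-compactness argument on the Ces\`aro means $\tilde q_N=\frac{1}{N}\sum_{k=1}^N S_k$ (which you mention parenthetically but do not develop) produces $q$. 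In a Gordin-type construction the convergent object is $\sum_{n\ge 0}\mathcal{L}^n g$ with $\mathcal{L}$ the transfer operator, not $\sum_{n\ge 0}g\circ\tau^n$.
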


\begin{proof}
The proof of the following result can be obtained by arguing exactly as in the proofs of~\cite[Lemma 12]{DFGTV1} and~\cite[Proposition 3]{DFGTV1}. For reader's convenience we provide here some of the details.
 Note first that
 \[
 \begin{split}
  \mathbb E_\omega \bigg{(}\sum_{k=0}^{n-1} g_{\sigma^k \omega} \circ T_\omega^k \bigg{)}^2 &=
  \sum_{k=0}^{n-1} \mathbb E_\omega (g_{\sigma^k \omega}^2\circ T_\omega^k )+2\sum_{0\le i<j\le n-1}\mathbb E_\omega ((g_{\sigma^i \omega} \circ T_\omega^i)
  (g_{\sigma^j \omega} \circ T_\omega^j)) \\
&=\sum_{k=0}^{n-1} \mathbb E_\omega (g_{\sigma^k \omega}^2\circ T_\omega^k )
+2\sum_{i=0}^{n-1}\sum_{j=i+1}^{n-1}\mathbb E_{\sigma^i \omega}(g_{\sigma^i \omega}( g_{\sigma^j \omega}\circ T_{\sigma^i \omega}^{j-i})).
  \end{split}
 \]
Set $G(\omega)=\mathbb E_\omega (g_\omega^2)$, $\omega \in \Omega$. By applying Birkhoff's ergodic theorem for $G$ over the ergodic measure-preserving system $(\Omega, \mathcal F, \mathbb P, \sigma)$, we find that
\[
\begin{split}
\lim_{n\to \infty} \frac 1 n  \sum_{k=0}^{n-1}  \mathbb E_\omega &(g_{\sigma^k \omega}^2\circ T_\omega^k ) =\lim_{n\to \infty} \frac 1n \sum_{k=0}^{n-1} G(\sigma^k \omega)=\int_\Omega G(\omega)\, d\mathbb P(\omega) \\
&=\int_{\Omega} \int_M g(\omega, x)^2 \, dh_\omega^0(x) \, d\mathbb P(\omega)
=\int_{\Omega \times M}g(\omega, x)^2\, d\mu (\omega, x),
\end{split}
\]
for $\mathbb P$-a.e. $\omega \in \Omega$.
Furthermore, set
\[
\Psi(\omega)=\sum_{n=1}^\infty \int_M  g(\omega, x)g(\tau^n (\omega, x))\, dh_\omega^0(x)=\sum_{n=1}^\infty\mathcal L_\omega^n (g_\omega \cdot h_\omega^0)(g_{\sigma^n\omega}).
\]
By \eqref{Dec}, \eqref{20f}, \eqref{Obs} and~\eqref{nc},  there is a constant $c>0$ so that
\[
\lvert \Psi(\omega)\rvert  \leq c, \quad \text{for $\mathbb P$-a.e. $\omega \in \Omega$.}
\]
 In particular, $\Psi \in L^1(\Omega)$ and thus it follows again from Birkhoff's ergodic theorem   that
\begin{equation}\label{DD}
\lim_{n\to \infty}\frac{1}{n} \sum_{i=0}^{n-1}\Psi(\sigma^i \omega)=\int_\Omega \Psi(\omega)\, d\mathbb P(\omega)=\sum_{n=1}^\infty \int_{ \Omega \times X}g (\omega, x)g (\tau^n(\omega, x))\, d\mu (\omega, x),
\end{equation}
for $\mathbb P$-a.e. $\omega \in \Omega$. In order to complete the proof of the existence of $\Sigma^2$, it is enough  to show that
\begin{equation}\label{cv}
\lim_{n\to \infty} \frac 1 n \bigg{(}\sum_{i=0}^{n-1}\sum_{j=i+1}^{n-1}\mathbb E_{\sigma^i \omega}(g_{\sigma^i \omega}(g_{\sigma^j \omega}\circ T_{\sigma^i \omega}^{j-i}))-\sum_{i=0}^{n-1}\Psi(\sigma^i \omega)\bigg{)}=0,
\end{equation}
for $\mathbb P$-a.e. $\omega \in \Omega$. However, \eqref{cv} can be obtained by arguing exactly as in the proof of~\cite[Proposition 3]{DFGTV1}. We conclude that the first assertion of the proposition holds. 

To prove the characterization for the positivity of $\Sigma^2$, let $X_n=g\circ\tau^n$, considered as a random variable with respect to the measure $\mu$. Then $\{X_n\}$ is a stationary sequence satisfying $\sum_n (n+1) \mathbb |E_\mu[X_nX_0]|<\infty$ (using Lemma \ref{corr}).
Thus,  using classical results for stationary sequences  (see \cite{IL}) we  conclude from \eqref{SF}  that
$$
\Sigma^2=\lim_{n\to\infty}\frac1n\text{Var}\left(\sum_{j=0}^{n-1}g\circ \tau^j\right).
$$
Moreover, \eqref{Cob0} is equivalent to $\Sigma^2=0$.
\end{proof}

\begin{proposition}\label{CovProp}
Let $g\colon \Omega \times M \to \mathbb R^d$ be a good observable such that $g\in L^2(\Omega \times M, \mu)$ and $h_\omega^0(g_\omega)=0$ for $\mathbb P$-a.e. $\omega \in \Omega$. Then, 
there exists a positive semi-definite $d\times d$ matrix $\Sigma^2$ such that for $\mathbb P$-a.e. $\omega \in \Omega$ we have that 
\[
\lim_{n\to\infty}\frac 1n \mathbb E_\omega\big(S_n g(\omega, \cdot)\big)^2=\Sigma^2.
\]
 Moreover, $\Sigma^2$ is not positive definite if and only if there exist  $v\in\mathbb R^d\setminus \{0\}$ and an $\mathbb R$-valued function $r\in L^2(\Omega \times M, \mu)$ such that
\begin{equation}\label{Cob}
 v\cdot g=r-r\circ \tau,\,\,\,\mu-\text{a.s.}
\end{equation} 
\end{proposition}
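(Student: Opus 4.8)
The plan is to deduce the vector-valued statement from the scalar Proposition~\ref{VarProp} by polarization. Fix $v\in\mathbb R^d$ and consider $v\cdot g=\sum_{i=1}^d v_ig^i\colon\Omega\times M\to\mathbb R$. This is again a good observable: each $(v\cdot g)_\omega=\sum_i v_ig_\omega^i$ lies in $\mathcal O$ since $\mathcal O$ is a vector space, $\esssup_\omega\|(v\cdot g)_\omega\|_o\le\big(\sum_i|v_i|\big)\|g\|_\infty<\infty$, and measurability is clear; moreover $v\cdot g\in L^2(\Omega\times M,\mu)$ and $h_\omega^0((v\cdot g)_\omega)=v\cdot h_\omega^0(g_\omega)=0$ for $\mathbb P$-a.e. $\omega$. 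Since $S_n(v\cdot g)(\omega,\cdot)=v\cdot S_n g(\omega,\cdot)$, Proposition~\ref{VarProp} applied to $v\cdot g$ produces a number $\Sigma_v^2\ge 0$ with $\Sigma_v^2=\lim_{n\to\infty}\frac1n\mathbb E_\omega\big(v\cdot S_n g(\omega,\cdot)\big)^2$ for $\mathbb P$-a.e. $\omega$.

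To build the matrix $\Sigma^2$, I would apply this with $v\in\{e_i,\ e_j,\ e_i+e_j\}$ for $1\le i,j\le d$ and use the polarization identity
\[
2\,\mathbb E_\omega\big(S_n g^i(\omega,\cdot)\,S_n g^j(\omega,\cdot)\big)=\mathbb E_\omega\big(S_n(g^i+g^j)(\omega,\cdot)\big)^2-\mathbb E_\omega\big(S_n g^i(\omega,\cdot)\big)^2-\mathbb E_\omega\big(S_n g^j(\omega,\cdot)\big)^2,
\]
so that $\frac1n\mathbb E_\omega(S_n g^i\,S_n g^j)$ converges, for $\mathbb P$-a.e. $\omega$, to a constant; intersecting the finitely many resulting full-measure sets over all pairs $(i,j)$ gives one full-measure set on which $\frac1n\mathbb E_\omega(S_n g(\omega,\cdot))^2\to\Sigma^2$ for a $d\times d$ matrix $\Sigma^2$, entrywise and hence in the matrix sense. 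The identity $(\mathcal L_\omega^kh)(\phi)=h(\phi\circ T_\omega^k)$ together with $\mathcal L_\omega^kh_\omega^0=h_{\sigma^k\omega}^0$ gives $\mathbb E_\omega(g_{\sigma^k\omega}\circ T_\omega^k)=\mathbb E_{\sigma^k\omega}(g_{\sigma^k\omega})=0$, so $S_n g(\omega,\cdot)$ is centered under $h_\omega^0$; hence each $\frac1n\mathbb E_\omega(S_n g(\omega,\cdot))^2$ is a covariance matrix, in particular positive semi-definite, and therefore so is its limit $\Sigma^2$. This establishes the first assertion.

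For the degeneracy criterion, note that from the entrywise convergence $\Sigma^2v\cdot v=\lim_n\frac1n\mathbb E_\omega\big(v\cdot S_n g(\omega,\cdot)\big)^2$ for every fixed $v$; comparing this with the scalar limit on the intersection of the two relevant full-measure sets yields $\Sigma^2v\cdot v=\Sigma_v^2$. Since $\Sigma^2$ is positive semi-definite, it fails to be positive definite precisely when $\Sigma^2v\cdot v=0$ --- equivalently $\Sigma_v^2=0$ --- for some $v\in\mathbb R^d\setminus\{0\}$, and by the final assertion of Proposition~\ref{VarProp} applied to the scalar good observable $v\cdot g$, this holds if and only if there is $r\in L^2(\Omega\times M,\mu)$ with $v\cdot g=r-r\circ\tau$ $\mu$-a.s., i.e.~\eqref{Cob}.

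I do not expect a genuine obstacle here, as the argument is essentially bookkeeping around Proposition~\ref{VarProp}. The only points requiring a little attention are checking that real linear combinations of the coordinates of a good observable remain good observables, and amalgamating the finitely many $\mathbb P$-null exceptional sets (one per pair $(i,j)$, plus the one arising when the scalar characterization is applied to the degeneracy vector) into a single null set.
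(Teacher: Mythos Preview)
Your proposal is correct and follows essentially the same route as the paper: reduce to the scalar case via $v\cdot g$, apply Proposition~\ref{VarProp}, and recover the off-diagonal entries by polarization with $v\in\{e_i,e_j,e_i+e_j\}$, then read off the degeneracy criterion from $\Sigma^2 v\cdot v=\Sigma_v^2$. Your write-up is in fact a bit more careful than the paper's (you explicitly check that $v\cdot g$ is a good observable, note the centering, and track the finitely many null sets), but the underlying argument is the same.
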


\begin{proof}
Let $v\in\mathbb R^d$ and consider the real valued function $g_{v}=v\cdot g$.  By applying Proposition~\ref{VarProp},  we obtain that there exists $\Sigma_v^2\ge 0$ such that
\begin{equation}\label{bbx}
\lim_{n\to \infty} \frac 1 n \mathbb E_\omega (S_n g_v(\omega, \cdot))^2=\Sigma_v^2, \quad \text{for $\mathbb P$-a.e. $\omega \in \Omega$.}
\end{equation}
Moreover, $\Sigma_v^2=0$ if and only if there exists $r\in L^2(\Omega \times M, \mu)$ such that
\begin{equation}\label{Cob}
g_v=r-r\circ \tau.
\end{equation} 

For $1\le i, j\le d$,  we claim that there exists a real number $\Sigma^2_{i,j}$ so that $\mathbb P$-a.e. we have that
\begin{equation}\label{Sig ij}
\lim_{n\to\infty}\frac1n\mathbb E_\omega(S_n g^i(\omega,\cdot)\cdot S_n g^j(\omega,\cdot))=\Sigma^2_{i,j}.
\end{equation}
Clearly, it follows from~\eqref{bbx} that  $\Sigma_{i,j}^2=\big(\Sigma_{e_i+e_j}^2-\Sigma_{e_i}^2-\Sigma_{e_j}^2\big)/2$ satisfies \eqref{Sig ij}, where $e_i$ denotes  the standard $i$-th unit vector. 
 The resulting matrix $\Sigma^2=(\Sigma^2_{i,j})$ is positive semi-definite.  Indeed, it is easy to verify that $\Sigma^2 v\cdot v=\Sigma^2_v$ for $v\in \mathbb R^d$. From this we also see that $\Sigma^2$ is not positive definite if and only if there exists $v\in \mathbb R^d$, $v\neq 0$ such that 
$\Sigma^2_v=0$. However,  it follows from  the previous paragraph that this happens  if and only if $v\cdot g=g_v$ can be written in the form~\eqref{Cob}. The proof of the proposition is completed. 

\end{proof}

\begin{theorem}\label{Th}
Let $g\colon \Omega \times M \to \mathbb R^d$ be a good observable such that $g\in L^\infty (\Omega \times M, \mu)$ and  $h_\omega^0(g_\omega)=0$ for $\mathbb P$-a.e. $\omega \in \Omega$. Furthermore, suppose that $\Sigma^2$ given by Proposition~\ref{CovProp} is positive definite. Then, for $\mathbb P$-a.e. $\omega \in\Omega$ and every $\delta >0$, there exists a coupling between  $\{g_{\sigma^n\omega}\circ T_\omega^{n}: n\geq 0\}$, considered as a sequence of random variables on $(M,h_\omega^0)$,
and a  sequence $(Z_k)_k$ of independent centered (i.e. of zero mean) Gaussian random vectors
such that
\[
\bigg{\lvert} \sum_{i=0}^{n-1}g_{\sigma^n\omega}\circ T_\omega^{i}-\sum_{i=1}^n Z_i \bigg{\rvert} =O(n^{1/4+\delta}),\quad\text{almost-surely}.
\]
Moreover, there exists a constant $C=C_\delta(\omega)>0$ so that for every $n\geq1$,
\begin{equation*}
\left\|\sum_{i=0}^{n-1}g_{\sigma^n\omega}\circ T_\omega^{i}-\sum_{i=1}^n Z_i\right\|_{L^2}\leq Cn^{1/4+\delta}.
\end{equation*}
Furthermore, there is a constant $C'=C'_\delta(\omega)>0$ so that for every unit vector $v\in \mathbb R^d$,
$$
\left\|\sum_{i=0}^{n-1} g_{\sigma^n\omega}\circ T_\omega^{i}\cdot v\right\|_{L^2}^2-Cn^{1/2+\delta}\leq \left\|\sum_{i=1}^n Z_i\cdot v\right\|_{L^2}^2\leq \left\|\sum_{i=0}^{n-1} g_{\sigma^n\omega}\circ T_\omega^{i}\cdot v \right\|_{L^2}^2+C'n^{1/2+\delta}.
$$
\end{theorem}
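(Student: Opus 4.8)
The plan is to deduce Theorem~\ref{Th} from the abstract Theorem~\ref{Gthm} by verifying, for $\mathbb P$-a.e.\ $\omega \in \Omega$, that the sequence $A_\ell := g_{\sigma^\ell \omega} \circ T_\omega^\ell$ (viewed as random variables on the probability space $(M, h_\omega^0)$) satisfies all the hypotheses of Theorem~\ref{Gthm} with $p$ arbitrarily large. Since $g \in L^\infty(\Omega \times M, \mu)$ and a good observable satisfies $\|g_\omega^i\|_o \le \|g\|_\infty$ uniformly, the $A_\ell$ are uniformly bounded in $L^\infty(M, h_\omega^0)$, hence bounded in $L^p$ for every $p > 4$; this also handles the centering, since $h_\omega^0(g_\omega) = 0$ translates to $\mathbb E_\omega(A_\ell) = 0$ after noting $h_{\sigma^\ell\omega}^0(g_{\sigma^\ell\omega}) = 0$ and using $\mathcal L_\omega^\ell h_\omega^0 = h_{\sigma^\ell\omega}^0$ together with Lemma~\ref{dualtwist2} at $\theta = 0$.

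\textbf{The three conditions to check.} First, the spectral mixing condition~\eqref{HH}: this is exactly the content of Lemma~\ref{631l}, which gives~\eqref{HH} for $\mathbb P$-a.e.\ $\omega$ with $\varepsilon_0 = \rho$ and constants $C, c$ depending on $\omega$ (the $\omega$-dependence of constants is harmless, as Theorem~\ref{Gthm} is applied separately for each fixed $\omega$). Second, the lower bound~\eqref{CovCond}: by Proposition~\ref{CovProp} we have $\frac 1 n \mathbb E_\omega(S_n g(\omega, \cdot))^2 \to \Sigma^2$ for $\mathbb P$-a.e.\ $\omega$, and since $\Sigma^2$ is assumed positive definite there is $c_1 > 0$ (e.g.\ half the smallest eigenvalue of $\Sigma^2$) with $\Cov_\omega(\sum_{j=1}^n A_j)v \cdot v = \mathbb E_\omega(S_n g(\omega,\cdot) \cdot v)^2 \ge c_1 n |v|^2$ for all large $n$ and all $v \in \mathbb R^d$; note $S_n g(\omega, \cdot) = \sum_{i=0}^{n-1} A_i$ matches the indexing up to a harmless shift. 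Third, the exponential decorrelation~\eqref{UniDec}: for a unit vector $v$, $\Cov_\omega(A_n \cdot v, A_{n+k} \cdot v) = h_{\sigma^n\omega}^0\big((g_{\sigma^n\omega}\cdot v)\,((g_{\sigma^{n+k}\omega}\cdot v) \circ T_{\sigma^n\omega}^k)\big)$ after centering, and this is $O(e^{-\lambda k})$ uniformly in $n$ and $v$ by Lemma~\ref{corr} applied with $\phi = g_{\sigma^n\omega}\cdot v$ (which has zero mean w.r.t.\ $h_{\sigma^n\omega}^0$) and $\psi = g_{\sigma^{n+k}\omega}\cdot v$, using $\|g_\omega^i\|_o \le \|g\|_\infty$ and property~\eqref{prod2} to bound $\|\phi\|_o, \|\psi\|_o$ by a constant.

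\textbf{Conclusion.} Having verified the hypotheses, Theorem~\ref{Gthm} yields, for each fixed $\delta > 0$, a coupling with independent centered Gaussian vectors $(Z_k)$ such that $|\sum_{i=0}^{n-1} A_i - \sum_{i=1}^n Z_i| = o(n^{a_p + \delta/2})$ a.s., together with the $L^2$ bound~\eqref{Var est} and, thanks to~\eqref{UniDec}, the variance comparison~\eqref{Var est1}. Since $a_p = \frac14 + \frac1{4(p-1)} \to \frac14$ as $p \to \infty$, choosing $p$ large enough that $\frac1{4(p-1)} < \delta/2$ gives the rate $O(n^{1/4 + \delta})$ in all three displayed conclusions. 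One must be slightly careful that the coupling a priori depends on $\delta$ (and on $\omega$), which is fine since the statement quantifies "for every $\delta > 0$, there exists a coupling." \textbf{The main obstacle} is bookkeeping rather than a substantial new idea: one must check that the full-measure set of $\omega$ on which Proposition~\ref{CovProp}, Lemma~\ref{631l}, and Lemma~\ref{corr} all hold can be taken to be $\sigma$-invariant (or simply intersect the three full-measure sets), and that the constants produced by Theorem~\ref{Gthm}—though $\omega$-dependent—are finite for each such $\omega$; the decorrelation and lower-variance inputs themselves are already packaged in the preceding lemmas.
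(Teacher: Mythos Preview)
Your proposal is correct and follows essentially the same route as the paper's own proof, which simply cites Proposition~\ref{CovProp}, Lemma~\ref{631l}, and Theorem~\ref{Gthm} together with the $L^p$-boundedness of $(g_{\sigma^n\omega}\circ T_\omega^n)_n$. Your write-up is in fact more explicit than the paper's: you correctly spell out that the third conclusion of Theorem~\ref{Th} requires the exponential decorrelation hypothesis~\eqref{UniDec}, and you supply this via Lemma~\ref{corr}, a step the paper's one-line proof leaves implicit.
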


\begin{proof}
The conclusion of the theorem follows from Proposition \ref{CovProp}, Lemma~\ref{631l} and Theorem~\ref{Gthm} (by noting that the sequence  $(g_{\sigma^n \omega}\circ T_\omega^{n} )_{n\in \N}$ is bounded in $L^p(M, h_\omega^0)$ for each $p\geq1$).
\end{proof}

\subsection{A general scheme for verifying condition~\eqref{Dec}}\label{subdec}
We will now discuss a  general framework under which~\eqref{Dec} holds true. In order to do so, we introduce some additional assumptions:
\begin{itemize}
\item $\mathcal B=(\mathcal B, \| \cdot \|)$ can be compactly embedded in $\mathcal B_w=(\mathcal B_w, \| \cdot \|_w)$ and $\| \cdot \|_w \le \| \cdot \|$ on $\mathcal B$;
\item there is a bounded linear operator $\mathcal L$ on $\mathcal B$ (that admits an extension to a bounded operator on $\mathcal B_w$) with the property that there exist $K, \rho >0$ such that
\begin{equation}\label{gap}
\|\mathcal L^n h\| \le Ke^{-\rho n}\|h \|, \quad \text{for $h \in \mathcal B$, $h(1)=0$ and $n \in \N$.}
\end{equation}
\item there exist $C_i>0$, $i \in \{1, 2, 3\}$ and $b\in (0, 1)$ such that 
\begin{equation}\label{ab1}
\| \mathcal L_n \cdots \mathcal L_1 \|_w \le C_1,
\end{equation}
and 
\begin{equation}\label{ab2}
\| \mathcal L_n \cdots \mathcal L_1 h\| \le C_2b^n \|h \|+ C_3 \|h \|_w, 
\end{equation}
for each $n\in \N$, $h \in \mathcal B$ and $\mathcal L_1,\ldots, \mathcal L_n \in \mathcal P$, where $\mathcal P$ is a family of bounded operators on $\mathcal B$, $\mathcal L \in \mathcal P$ such that  each element of $\mathcal P$ admits an extension to a bounded operator on $\mathcal B_w$. Moreover, $\{h \in \mathcal B: h(1)=0\}$ is invariant for each operator in $\mathcal P$.
\end{itemize}

For $\epsilon_0>0$, set
\[
\mathcal P(\mathcal L, \epsilon_0):=\bigg{\{} \mathcal L'\in \mathcal P: |||\mathcal L'-\mathcal L|||:=\sup_{\|h \| \le 1} \|(\mathcal L'-\mathcal L)h \|_w \le \epsilon_0 \bigg{\}}.
\]
Provided that $\epsilon_0>0$ is sufficiently small, it follows from~\cite[Proposition 2.10]{CR} that  there exist $D, \lambda >0$ such that
\[
\| \mathcal L_n \cdots \mathcal L_1 h \| \le De^{-\lambda n}\|h \|, \quad \text{for $h\in \mathcal B$, $h(1)=0$ and $n\in \N$.}
\]
Hence, if we build our cocycle $(\mathcal L_\omega)_{\omega \in \Omega}$ so that $\mathcal L_\omega \in \mathcal P(\mathcal L, \epsilon_0)$ for each $\omega \in \Omega$, we have that~\eqref{Dec} holds.

\section{Examples}\label{EX}
We now discuss various classes of random dynamical systems  to which Theorem~\ref{Th} is applicable. 
\subsection{Random hyperbolic dynamics}
Let $M$ be a $C^\infty$ compact connected Riemannian manifold and let $T$ be a topologically transitive Anosov map of class $C^{r+1}$, where $r>2$. For $\epsilon >0$,  set \[
\mathcal M_\epsilon(T):=\{S\colon M \to M : \ \text{$S$ is an Anosov map of class $C^{r+1}$ and $d_{ C^{r+1}}(S, T)<\epsilon$}\}.
\]
Let $\omega \to T_\omega \in \mathcal M_\epsilon(T)$ be a measurable map. By $\mathcal L_\omega$, we denote the transfer operator associated to $T_\omega$.  Provided that $\epsilon >0$ is sufficiently small, it is proved in~\cite[Section 3]{DFGTV} (by using arguments in Subsection~\ref{subdec}) that there exists a Banach space $\mathcal B=(\mathcal B, \| \cdot \|)$ and two norms $\| \cdot \|$ and $\|\cdot \|_w$ on $\mathcal B$, $\| \cdot \|_w 
\le \| \cdot \|$ such that~\eqref{LYineq} and~\eqref{Dec} hold. Moreover, elements of $\mathcal B$ are distributions of order at most $1$. The strong measurability of the map $\omega \mapsto \mathcal L_\omega$ is verified in~\cite[Subsection 3.1]{DFGTV}, while~\eqref{pm}
is proved in~\cite[Proposition 3.3]{DFGTV}.

Set $\mathcal O=C^r(M, \mathbb C)$. It follows from~\cite[Lemma 3.2]{GL} that~\eqref{prod} holds. Finally, \eqref{weakLY} can be established as in~\cite{DFGTV}.  Indeed, \eqref{weakLY} was proved in~\cite[p.653-654]{DFGTV} in the case when $d=1$, i.e.  when $g$ is  a real-valued observable. As for the case when $d>1$, it is sufficient to note that one only needs to justify that the version of~\cite[(58)]{DFGTV} holds true (with a constant independent on $\omega$, $n$ and $t$ with $\lvert t\rvert \le 1$). However, for this we only need to apply~\cite[(58)]{DFGTV} for $g_t:=t\cdot g$ instead of $g$, which can be done since 
\[
\sup_{\lvert t\rvert \le \rho}\|g_t(\omega,\cdot)\|_{C^r}\leq C_d\|g(\omega,\cdot)\|_{C^r},
\]
where $C_d>0$ is some constant which depends only on $d$. We conclude that Theorem~\ref{Th} can be applied in this setting. 

\begin{remark}\label{Demers}
We would like to explain the reason why we took $\mathcal O=C^r(M, \mathbb C)$.  Namely, this was done in order to satisfy assumptions introduced in Subsection~\ref{OS}, which in particular require that for $g\in \mathcal O$ and $h\in \mathcal B$, we can define 
$g\cdot h\in \mathcal B$. Moreover, for a fixed $g\in \mathcal O$, $h\mapsto g\cdot h$ needs to be a bounded operator on $\mathcal B$. 
These  properties  were crucial for the
twisted transfer operators $\mathcal L_\omega^{\theta}$
to be well defined and for our approach to
work. Since our $\mathcal B$ belongs to the class of anisotropic Banach spaces introduced in~\cite{GL} (which are precisely defined as closures of $C^r(M, \mathbb C)$ with respect to certain norms), it was necessary to consider observables in $C^r(M, \mathbb C)$.
We also note that the version of the central limit theorem case in the deterministic case  in \cite{GL} is also stated for $C^r$-observables (see \cite[Remark 2.10]{GL}).

As pointed to us by M. Demers, the class of anisotropic Banach spaces introduced
in  \cite{BT} have the property that the multiplication by a H\"older continuous observable
(with a suitably chosen H\"older constant) acts as a bounded linear operator. We note
that the same holds true for the class of spaces introduced Demers and Zhang \cite{DZ},
which are build to handle also the presence of singularities in the two-dimensional
setting. Consequently, using spaces introduced
in \cite{BT}, it seems reasonable that one can extend Theorem \ref{Th} by dealing with
observables $g$ which have the property that $g_\omega$ is H\"older continuous. We differ from
pursuing this direction because it would require a lot of additional technical work.
 We feel that this would somehow hide the main contribution of this paper,
which is the adaptation of the spectral approach for ASIP given in \cite{GO} to the case of quenched
random dynamics, and its applications.
\end{remark}

\subsection{Random perturbations of the Lorentz gas}
Let us consider the two dimensional torus $\mathbb T^2$ on which we  place finitely
many (disjoint) scatterers $\Gamma_i$, $i=1, \ldots, d$  which have $C^3$
 boundaries with strictly positive
curvature.
 We stress that in what follows these scatterers will be allowed to move but their number and the arclengths of their boundaries will not change. Set
\[
M:=\cup_{i=1}^d I_i \times [-\pi/2, \pi/2], 
\]
where $I_i$ is an interval with endpoints identified such that $|I_i|=|\partial \Gamma_i|$. Furthermore, let $m$ be the normalized Lebesgue measure on $M$, i.e. $dm=\frac{1}{\pi L}dr d\varphi$ where $L=\sum_{i=1}^d|I_i|$.
We consider the class $\mathcal F$ of maps on $M$  introduced in~\cite[Section 3.]{DZ}. 
We stress that  $\mathcal F$ contains various perturbations of the billiard map associated to periodic Lorentz gas (see~\cite[Section 2.4.]{DZ} for details). Let now $\mathcal F'$ consist of all those $T\in \mathcal F$ that preserve measure $\mu$ given by
$d\mu=\frac{\pi}{2} \cos \varphi\, dm$. Hence, for $T\in \mathcal F'$ we have that~\cite[(H5)]{DZ} holds with $\eta=1$. 

Let $\lVert \cdot \rVert_w$ and $\lVert \cdot \rVert_{\mathcal B}$ be  norms on $C^1(M, \mathbb C)$ introduced in~\cite[Section 3.2.]{DZ}. Moreover, let 
 $\mathcal B=(\mathcal B, \| \cdot \|)$ be the completion of $C^1 (M, \mathbb C)$  with respect to $\lVert \cdot \rVert_{\mathcal B}$. It follows from~\cite[Lemma 3.4.]{DZ} that elements of $\mathcal B$ are distributions of order at most $1$. 
Finally, let $d_{\mathcal F}$ be the distance on $\mathcal F$ introduced in~\cite[Section 3.4.]{DZ}.

Let us now fix $T\in \mathcal F'$ such that $(T, \mu)$ is mixing.  Thus, \eqref{gap} holds (with some $K, \rho>0$), where $\mathcal L=\mathcal L_T$.
For $\epsilon >0$, set
\[
\mathcal M_\epsilon (T):=\{ T' \in \mathcal F': d_{\mathcal F}(T, T')<\epsilon \}.
\]
Let $\omega \mapsto T_\omega \in \mathcal M_\epsilon(T)$ be a measurable map.  By~\cite[Theorem 2.3]{DZ}, there exist $C,\beta >0$ such that 
\[
||| \mathcal L_{T_1}-\mathcal L_{T_2} ||| =\sup_{\|h \| \le 1} \| (\mathcal L_{T_1}-\mathcal L_{T_2})h \|_w \le Ce^{\beta/2}, \quad \text{for $T_1, T_2 \in \mathcal F'$.}
\]
Moreover, it follows from the proof of~\cite[Proposition 5.6]{DZ} that~\eqref{ab1} and~\eqref{ab2} hold (with some $b\in (0, 1)$, $C_i>0$, $i\in \{1, 2, 3\}$), where $\mathcal L_i=\mathcal L_{T_i}$, $T_i \in \mathcal M_\epsilon (T)$.
Provided that $\epsilon >0$ is sufficiently small, it follows from the discussion in Subsection~\ref{subdec} that \eqref{Dec} holds. Moreover, the strong
measurability of $\omega \mapsto \mathcal L_\omega$ can be obtained by arguing as in~\cite[Subsection 3.1]{DFGTV}. In this setting $h_\omega^0=\mu$ and therefore~\eqref{pm} is trivially satisfied. 

Let $\gamma \in (0,1)$ be as in the statement of~\cite[Lemma 5.3.]{DZ} and let $\mathcal O=C^\gamma (M)$ be the space of all H\"{o}lder continuous functions $\varphi \colon M \to \mathbb C$ with H\"{o}lder  exponent $\gamma$.
Hence, \cite[Lemma 5.3.]{DZ} implies that~\eqref{prod} holds. Finally, one verifies~\eqref{weakLY} by arguing as in the previous subsection. We conclude that Theorem~\ref{Th} can be applied in this setting.

\subsection{Random Lasota-Yorke maps}\label{LY}
Our results hold true for the random expanding maps considered by Buzzi \cite{Buz}, for which the spectral method was developed in \cite{DFGTV2}. The results also hold true for the expanding maps considered in \cite[Ch. 5]{HK}. In order not to overload the paper we will only present a more concrete and relatively simple example. 

Let $M=[0,1]$ equipped with the Lebesgue measure $m$.  For a piecewise $C^2$ map $T\colon M\to M$, set $\delta (T)=\esinf_{x\in [0,1]} \lvert T'\rvert$ and let $b(T)$ denote the number of intervals of monoticity (branches) of $T$.
Consider now a measurable map $\omega\mapsto T_\omega$, $\omega \in \Omega$  of piecewise $C^2$ maps on $[0, 1]$ such that
\[
 b:=\esssup_{\omega \in \Omega} b(T_\omega)<\infty, \  \delta:=\esinf_{\omega \in \Omega} \delta (T_\omega)>1, \  D:=\esssup_{\omega \in \Omega}\lVert T''_\omega \rVert_{L^\infty}<\infty.
\]
In addition, we impose the following condition:
\[
\text{for every subinterval} \ J\subset M, \exists k= k(J) \text{ such that  for $\mathbb P$- a.e. }  \omega \in \Omega, \  T_\omega^{k}(J) = M.
\]
Finally, let $\mathcal B=(\mathcal B, \| \cdot \|)=(BV, \| \cdot \|_{BV})$ and $\| \cdot \|_w=\|\cdot \|_{L^1(m)}$. By identifying each $v\in BV$ with the functional $\phi \mapsto \int_M \phi v\, dm$ on $C^0(M, \mathbb C)$, we have that $\mathcal B$ consists of distributions of order $0$ (i.e. measures).
It is proved in~\cite[Subsection 2.3.1]{DFGTV2} that~\eqref{LYineq} and~\eqref{Dec} hold. Moreover, Proposition~\ref{ho} (see Remark~\ref{rpm}) implies that~\eqref{pm} holds. 

Set $\mathcal O=\mathcal B=BV$. Then, \eqref{prod} holds. Finally, \eqref{weakLY} holds with $B_3=1$. We conclude that Theorem~\ref{Th} can be applied in this setting.

\section{Acknowledgements}
We would like to express our gratitude to  anonymous referees and the handling editor for many useful comments and suggestions that helped us to improve our paper. In addition, we thank  Mark Demers for his comments in relation to Remark~\ref{Demers}.
D.D. was supported in part by  Croatian Science Foundation under the project IP-2019-04-1239 and by the University of Rijeka under the project uniri-prirod-18-9.

\bibliographystyle{amsplain}

\end{document}